\newtheorem{definition}{Definition}
\newtheorem{theorem}{Theorem}
\newtheorem{lemma}{Lemma}
\newtheorem{corollary}{Corollary}
\def\de{\delta}
\def\eps{\varepsilon}
\def\Si{\Sigma}
\def\al{\alpha}
\def\la{\lambda}
\def\ti{\widetilde}
\def\kappa{\varkappa}
\def\si{\sigma}
\def\C{{\mathbb C}}
\def\sn{{\mathfrak S}_n}
\def\beq{\begin{equation}}
\def\eeq{\end{equation}}
\def\bea{\begin{eqnarray*}}
\def\eea{\end{eqnarray*}}
\def\inv{\operatorname{inv}}
\def\maj{\operatorname{maj}}
\def\comaj{\operatorname{comaj}}
\def\fix{\operatorname{fix}}
\def\des{\operatorname{des}}
\def\Des{\operatorname{Des}}
\def\exc{\operatorname{exc}}
\def\umaj{u_{\maj}}
\def\udes{u_{\des}}
\def\uinv{u_{\inv}}
\def\Mmaj{M_{\maj}}
\def\Mdes{M_{\des}}
\def\Minv{M_{\inv}}
\def\tudes{u_{\widetilde\des}}
\def\tumaj{u_{\widetilde\maj}}
\def\tuinv{u_{\widetilde\inv}}
\def\hmaj{h_{\maj}}
\def\hdes{h_{\des}}
\def\hinv{h_{\inv}}
\def\Reg{\operatorname{Reg}}
\def\Comp{\operatorname{Comp}}
\def\Tr{\operatorname{Tr}}
\def\Ide{\operatorname{Ide}}
\vershik\url{vershik@pdmi.ras.ru}
\natalia\url{natalia@pdmi.ras.ru}
\author{N.~V.~Tsilevich\thanks{%
St.~Petersburg Department of Steklov Institute of Mathematics and
St.~Petersburg State University.
E-mail: \natalia. Supported by the RFBR grant 14-01-00373.}
\and A.~M.~Vershik\thanks{%
St.~Petersburg Department of Steklov Institute of Mathematics, 
St.~Petersburg State University, and Institute for Information Transmission Problems.
E-mail: \vershik. Supported by the RSF grant 14-50-00150.}}
\title{On the relation of some combinatorial functions to representation theory}
\date{}
\begin{document}

\maketitle

\begin{abstract}
The paper is devoted to the study of some well-knonw combinatorial functions on the symmetric group $\sn$ --- the major index $\maj$, the descent number $\des$, and the inversion number $\inv$ --- from the representation-theoretic point of view. We show that each of these functions generates in the group algebra the same ideal, and the restriction of the left regular representation to this ideal is isomorphic to the representation of $\sn$ in the space of $n\times n$ skew-symmetric matrices. This allows us to obtain formulas for the functions $\maj,\des,\inv$ 
in terms of matrices of an exceptionally simple form. These formulas are applied to find the spectra of the elements under study in the regular representation, as well as to deduce a series of identities relating these functions to one another and to the number of fixed points
 $\fix$.
 
 \smallskip\noindent
 {\it Keywords:} major index, descent number, inversion number, representations of the symmetric group, skew-symmetric matrices, dual complexity.
\end{abstract}

\section{Introduction}

In the representation theory of noncommutative locally compact groups, one of the oldest ideas, which generalizes the idea of Fourier transform, is that the dual object to every element of the group algebra is the operator-valued function defined on the space of equivalence classes of irreducible representations whose value at a given class is the type (up to equivalence) of the unitary operator corresponding to the given element in representations of this class.

Strange as it may seem, this idea, which has been fruitfully employed in the representation theory of Lie groups, has, to the authors' knowledge, little popularity in the representation theory of finite groups and, in particular, of the symmetric group
 $\sn$. In the elaboration of this idea, we can suggest the following definition.

\begin{definition}\label{def:complexity}
 {\rm The {\it dual complexity} of an element  $a$ of the group algebra $\C[G]$ is the dimension of the cyclic subspace (ideal)
 $\Ide(a)=\C[G]a$ generated by all left translations of this element.}
\end{definition}

For example, the dual complexity of every group element (regarded as a $\delta$-function) coincides with  the order of the group, i.e., is the maximum possible. What can be said of elements of the group algebra with small dual complexity? Is not there some dependence, or rather inverse dependence, between the size of the support of an element and its dual complexity, as in the classical theory of Fourier transform? Such questions for the symmetric group directly relate combinatorics to representation theory, and seem not to have been studied. Our paper should be regarded as the first steps in this direction.

The following example was essentially observed (in a quite different connection and in other terms) in 
 \cite{Boj-Szw}. Consider the distance from a permutation $g\in {\frak S}_n$ to the identity of $\sn$ in the word metric with respect to the Coxeter generators, or, in other words, the inversion number $\inv(g)$. The duality of the corresponding element of $\C[\sn]$ is equal to 
 $n(n-1)/2+1$, and the representation in the ideal $\Ide(\inv)$ is isomorphic to the sum of the identity representation and the representation in the space of $n\times n$ skew-symmetric matrices. 

We were interested in more complicated statistics\footnote{According to a tradition which originates in physics, 
functions on the symmetric group are often called statistics of permutations.} on the symmetric group, such as the major index
 $\maj(g)$ and the number of descents $\des(g)$, see, e.\,g.,~\cite{St1}. The study of these statistics goes back to MacMahon
 \cite{MacM}, and by now there is an extensive literature devoted to different problems of enumerative combinatorics involving these functions, which play an important role in the combinatorics of permutations (let us mention, for instance, the papers \cite{Carl, Foata, Sta76, FS, GG, GesReu, SharW}). Note also the recent appearance (in a slightly different version,  for Young tableaux) of the major index 
as a key element of a construction establishing a relation between representations of the infinite symmetric group and the affine algebra 
 $\widehat{\mathfrak{sl}_2}$, see~\cite{serp}. It was an additional motivation for investigating not purely combinatorial, but representation-theoretic properties of the statistics under consideration, which up to now have not been given attention.

The attempt to consider the complexity of these elements in the sense of Definition~{\ref{def:complexity} led to unexpected results: not only the complexity turned out to be also equal to a humble value of $n(n-1)/2+1$, but it appeared that the corresponding ideals
$\Ide(\maj)$ and $\Ide(\des)$ coincide with $\Ide(\inv)$ and thus can also be realized (up to the subspace of constants) in the space of skew-symmetric matrices.

The elaboration of this idea allowed us to obtain for the functions $\maj$, $\des$, $\inv$, originally defined by nontrivial combinatorial conditions, the surprisingly simple formulas~\eqref{maj}--\eqref{inv}  in terms of matrices of a very simple form. The usefulness of these formulas is illustrated by the fact that they allow one to easily find the spectra of the elements under consideration in the regular representation (Theorems~\ref{th:spectra} and~\ref{th:inv}), as well as to obtain a whole series of new simple identities relating these functions to each other and to another important function --- the number of fixed points  
 $\fix$ (Corollaries~\ref{cor:identities},~\ref{cor:fix}).  

In Section~\ref{sec:func} we briefly recall the definitions and basic properties of the statistics under study. The main results are presented in Section~\ref{sec:skew}: we explicitly construct and study in detail the isomorphism between the representations of the group $\sn$ in the ideal $\Ide(\maj)=\Ide(\des)=\Ide(\inv)$ and in the space of skew-symmetric matrices, and obtain formulas for these statistics as simple matrix elements. In Section~\ref{sec:corrs} these formulas are applied to obtain corollaries: to find the spectra of the functions under study in the regular representation and to derive a series of identities involving them. Finally, in Section~\ref{sec:solomon} we briefly describe the relation of our results to investigations of the so-called Solomon descent algebra.

\section{The combinatorial functions $\maj$, $\des$, $\inv$}\label{sec:func}

By $\sn$ we denote the symmetric group of order~$n$, and by
 $\pi_\la$, the irreducible representation of $\sn$ corresponding to a Young diagram $\la$. 


We consider the following functions of permutations $\si\in\sn$ (see, e.\,g., \cite{St1}):
the {\it descent number} 
$$
\des(\si)=\#\Des(\si),\quad \mbox{где}\quad \Des(\si)=\{i=1,\ldots,n-1:\si(i)>\si(i+1)\},
$$ 
the {\it major index}
$$
\maj(\si)=\sum_{i\in\Des(\si)}i,
$$
the {\it inversion number}
$$
\inv(\si)=\#\{i<j:\si(i)>\si(j)\},
$$
and the {\it number of fixed points}
$$
\fix(\si)=\#\{i=1,\ldots,n:\si(i)=i\}.
$$

The statistics $\maj$ and $\inv$ are {\it MacMahonian}, i.e., their generating functions coincide and equal
$$
\sum_{\si\in\sn}q^{\maj(\si)}=\sum_{\si\in\sn}q^{\inv(\si)}=(1+q)(1+q+q^2)\ldots(1+q+\ldots+q^{n-1});
$$
the statistic $\des$ is {\it Eulerian}, i.e., its generating function is given by the Euler polynomials:
$$
\sum_{\si\in\sn}q^{\des(\si)}=A_n(q),\quad\mbox{where}\quad\sum_{n\ge0} A_n(q)\frac{z^n}{n!}=\frac{(1-q)e^z}{e^{qz}-qe^z}.
$$
It is not difficult to deduce that
\beq\label{constants}
\sum_{\si\in\sn}\maj(\si)=\sum_{\si\in\sn}\inv(\si)=n!\cdot\frac{n(n-1)}4,\quad\sum_{\si\in\sn}\des(\si)=n!\cdot\frac{n-1}2.
\eeq

For each of the statistics $\eps=\maj,\des,\inv$, we denote by  $u_\eps$ the corresponding element of the group algebra $\C[\sn]$:
$$
u_\eps=\sum_{g\in\sn}\eps(g)g\in\C[\sn].
$$
We will also need the {\it centered} (i.e., orthogonal to the constants) versions of the statistics under consideration, for which $\sum_{\si\in\sn}\ti\eps(\si)=0$:
\bea
&\widetilde\des(\si)=\des(\si)-\frac{n-1}2,\qquad \widetilde\maj(\si)=\maj(\si)-\frac{n(n-1)}4,&\\
&\widetilde\inv(\si)=\inv(\si)-\frac{n(n-1)}4,&
\eea
and the corresponding elements
$
u_{\widetilde\eps}=\sum_{\si\in\sn}{\widetilde\eps}(\si)\si
$ of the group algebra.

MacMahon studied four fundamental statistics of permutations: $\maj$, $\des$, $\inv$, and also the  {\it excedance number}
$$
\exc(\si)=\#\{i=1,\ldots,n-1:\si(i)>i\}.
$$
This statistic is also Eulerian, i.e., has the same distribution as $\des$. Thus, the four functions $\maj,\inv$ and $\des,\exc$ form two pairs of equally distributed statistics. However, in contrast to $\maj,\des,\inv$, the function $\exc$ generates another ideal, which coincides with the primary component of the natural representation (plus the subspace of constants). Hence its dual complexity is equal to
 $(n-1)^2+1$. It is not difficult to see that the same ideal is generated also by the function $\fix$. 

Note also that sometimes, instead of the major index, one uses the so-called {\it comajor index} $\comaj$, where 
$\comaj(\si)=\sum_{i\in\Des(\si)}(n-i)$. Since, obviously, $\comaj(\si)=n\des(\si)-\maj(\si)$, all the results obtained below for 
$\des$ and $\maj$ can easily be extended to $\comaj$.

\section{Realization of the combinatorial functions $\maj$, $\des$, $\inv$ in the space of skew-symmetric matrices}\label{sec:skew}

\def\M{{\cal M}}
\def\H{{\cal H}}
\def\Pnat{P_1} 
\def\Phook{P_2} 
\def\Cnat{C_1} 
\def\Chook{C_2} 


Let $\M$ be the space of $n\times n$ skew-symmetric matrices. There is a natural action of the symmetric group  $\sn$ in $\M$ by simultaneous permutations of rows and columns. It is well known that this representation $\varrho$ of  $\sn$ decomposes into the sum of two irreducible representations $\pi_{(n-1,1)}$ (the natural representation) and $\pi_{(n-1,1^2)}$, which will be denoted by $\pi_1$ and $\pi_2$, respectively. Also, denote by $\Pnat$ and $\Phook$ the orthogonal projections in $\M$ to the spaces of~$\pi_1$ and~$\pi_2$. 
We equip $\M$ with the inner product
\beq\label{prodinm}
\langle A,B\rangle=\frac12\Tr(AB)=\sum_{i<j}a_{ij}b_{ij},\qquad A=(a_{ij}),\; B=(b_{ij}).
\eeq
The representation $\varrho$ is, obviously, unitary. 

For $1\le i<j\le n$, denote by $E_{ij}$ the skew-symmetric matrix with $1$ in the cell
$(i,j)$ and $-1$ in the cell $(j,i)$, all other entries being zero. Obviously, the matrices $\{E_{ij}\}$ form an orthonormal basis in $\M$ with respect to the inner product~\eqref{prodinm}. By abuse of language, we will call them
the {\it matrix units}.

Consider (also for $1\le i<j\le n$) the following elements of the group algebra $\C[\sn]$:
$$
e_{ij}=\frac1{\sqrt{n!}}\sum\eps_{ij}(\si)\si,\qquad\mbox{where }\eps_{ij}(\si)=\begin{cases}1&\mbox{if }\si(i)<\si(j),\\-1&\mbox{if }\si(i)>\si(j).\end{cases}
$$
Denote by $\H$ the subspace in $\C[\sn]$ spanned by these elements. Let $\Reg_l$ be the left regular representation of the group $\sn$ in $\C[\sn]$.

\begin{lemma}\label{l:isom}
The subspace $\H$  is invariant under $\Reg_l$, and the corresponding subrepresentation is isomorphic to the representation $\varrho$ of~$\sn$ in the space of skew-symmetric matrices $\M$.
\end{lemma}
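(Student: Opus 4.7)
My plan is to construct an explicit $\sn$-equivariant isomorphism $\Phi\colon\M\to\H$ given on matrix units by $\Phi(E_{ij})=e_{ij}$ for $1\le i<j\le n$, extended antisymmetrically (so that $\Phi$ is well-defined on all of $\M$). Once $\Phi$ is shown to be equivariant and injective, the lemma follows immediately: $\H=\Phi(\M)$ is $\Reg_l$-invariant, and the resulting subrepresentation is isomorphic to $\varrho$ via $\Phi$.

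The first step is to verify the intertwining identity
$$\Reg_l(g)\,e_{ij}=e_{g(i),g(j)}$$
(with the skew-symmetric convention $e_{ji}=-e_{ij}$). Substituting the definition of $e_{ij}$ and reindexing the sum via the change of summation variable prescribed by $\Reg_l$, the claim reduces to the identity $\eps_{ij}(\si g)=\eps_{g(i),g(j)}(\si)$, which is immediate from the definition of $\eps_{ij}$. Matching against the formula $\varrho(g)E_{ij}=E_{g(i),g(j)}$ for the action on matrix units via simultaneous permutation of rows and columns shows that $\Phi$ intertwines $\varrho$ and $\Reg_l$; in particular $\H=\Phi(\M)$ is $\Reg_l$-invariant.

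The second step is to show $\Phi$ is injective. Since $\varrho=\pi_1\oplus\pi_2$ with $\pi_1,\pi_2$ irreducible and inequivalent, Schur's lemma forces $\ker\Phi\in\{0,\pi_1,\pi_2,\M\}$, so it suffices to verify that $\Phi$ does not vanish on either summand. The cleanest way to do this is via the Gram matrix $G_{(ij),(kl)}=\langle e_{ij},e_{kl}\rangle$: a short combinatorial count over the relative orderings of $\si(i),\si(j),\si(k),\si(l)$ yields $G_{(ij),(ij)}=1$, $G_{(ij),(kl)}=0$ when $\{i,j\}\cap\{k,l\}=\emptyset$, and $G_{(ij),(kl)}=\pm1/3$ when the two pairs share exactly one index. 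Since these values depend only on the combinatorial type of the overlap, $G$ commutes with the $\sn$-action and must act as a scalar on each isotypic component; the two resulting scalars are pinned down by the relations $\Tr G=\binom{n}{2}$ and $\Tr G^2$ (both easily computed from the explicit entries) and verified to be strictly positive.

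The main obstacle is the nonvanishing of $\Phi$ on the hook component $\pi_2$: the natural component $\pi_1$ is easy to handle because $\Phi$ applied to any vector of the form $v\mathbf{1}^\top-\mathbf{1}v^\top$ has a manifestly nonzero coefficient at the identity permutation, whereas for $\pi_2$ one either has to produce an explicit $\pi_2$-vector in $\M$ (for example as the orthogonal complement inside $\M$ of the $\pi_1$-part) and expand its $\Phi$-image, or---as above---use the Gram-matrix argument, which handles both components uniformly and is the most economical route.
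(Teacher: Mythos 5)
Your proof is correct and takes essentially the same route as the paper: the paper's entire proof consists of defining the intertwiner $\widehat T e_{ij}=E_{ij}$ (the inverse of your $\Phi$) and asserting that it is ``easy to check'' that this gives a (nonunitary) isomorphism. The two checks you actually supply are exactly the omitted details --- the identity $\eps_{ij}(\sigma g)=\eps_{g(i),g(j)}(\sigma)$ for equivariance, and nondegeneracy of the Gram matrix for injectivity; your Gram entries $1,\pm\tfrac13,0$ agree with the inner-product relations the paper records right after the lemma, and the Schur-plus-trace computation yields the two positive eigenvalues $\tfrac{n+1}{3}$ and $\tfrac13$, consistent with the normalization operator $A$ the paper introduces before Theorem~\ref{th:isom}.
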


\begin{proof}\label{l:Ysymm}
Define an operator $\widehat T:\H\to\M$ by the formula $\widehat Te_{ij}=E_{ij}$. It is easy to check that it determines a (nonunitary!) isomorphism of the representations under study. 
\end{proof}

In view of Lemma~\ref{l:isom}, we call the elements $e_{ij}$ the {\it pseudomatrix units} in~$\H$.
It is also convenient to put
$e_{ij}=-e_{ji}$ for  $i>j$ and $e_{ii}=0$. 

The pseudomatrix units are, obviously, not orthogonal with respect to the standard inner product in $\C[\sn]$. Namely, it is not difficult to obtain the following relations: for $i<j$, $k<l$,
$$
\langle e_{ij}, e_{kl}\rangle=\begin{cases}
1,&i=k,\;j=l,\\
\frac13,&i=k, j\ne l\mbox{ or } j=l,i\ne k,\\
-\frac13,&i=l\mbox{ or }j=k,\\
0,&\{k,l\}\cap\{i,j\}=\emptyset.
\end{cases}
$$

Denote by $\Cnat$ and $\Chook$ the normalized central Young symmetrizers (central idempotents in
 $\C[\sn]$) corresponding to the Young diagrams $(n-1,1)$ and $(n-2,1^2)$, respectively. 
 
\begin{lemma}
\beq
\Cnat e_{ij}=\frac1n\sum_{k=1}^n(e_{ik}+e_{kj}).
\eeq 
\end{lemma}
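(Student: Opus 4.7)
The plan is to reduce the claim to the analogous computation in $\M$ via the intertwiner $\widehat T$ of Lemma~\ref{l:isom}. Since $\Cnat$ is central in $\C[\sn]$ and $\widehat T$ commutes with the $\sn$-action, one has $\widehat T(\Cnat e_{ij})=\Cnat E_{ij}$. On the representation $\varrho=\pi_1\oplus\pi_2$ the central idempotent $\Cnat$ acts as the projection onto the $\pi_1$-summand along $\pi_2$; because $\pi_1$ and $\pi_2$ are inequivalent irreducibles each occurring with multiplicity one, this projector is intrinsic and coincides with the orthogonal projection $\Pnat$ associated to~\eqref{prodinm}. Thus the whole problem reduces to computing $\Pnat E_{ij}$ in $\M$ and transporting the answer back through $\widehat T^{-1}$.

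Next I would describe both summands of $\M$ concretely, so as to recognize $\Pnat E_{ij}$ by inspection. Put $V_k:=\sum_{l=1}^n E_{kl}$, with the conventions $E_{kk}=0$ and $E_{kl}=-E_{lk}$; a direct computation gives $(V_k)_{ab}=\delta_{ka}-\delta_{kb}$, so the $V_k$ satisfy $\sum_k V_k=0$ and span an $(n-1)$-dimensional subspace carrying the standard representation --- that is, the $\pi_1$-summand of $\M$. Moreover, for any skew-symmetric $A$ the row-sum map $r(A)=\bigl(\sum_b A_{ab}\bigr)_a$ is $\sn$-equivariant and takes values in $\{v\in\C^n:\sum_a v_a=0\}$. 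Restricted to the $\pi_1$-summand it is an isomorphism, so $\ker r$ is an $\sn$-invariant complement of the correct dimension and therefore must equal the $\pi_2$-summand.

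Then I would guess and verify that $\Pnat E_{ij}=\tfrac1n(V_i-V_j)$. The right-hand side lies in $\pi_1$ by construction, so the verification reduces to checking $E_{ij}-\tfrac1n(V_i-V_j)\in\ker r$. The $a$-th row sum of $E_{ij}$ equals $\delta_{ia}-\delta_{ja}$; for $\tfrac1n(V_i-V_j)$ the formula $\sum_b(V_k)_{ab}=n\delta_{ka}-1$ gives the same value $\delta_{ia}-\delta_{ja}$. Hence the two agree and the guess is correct. Applying $\widehat T^{-1}E_{kl}=e_{kl}$ together with $e_{jl}=-e_{lj}$ and $e_{kk}=0$ yields $\Cnat e_{ij}=\tfrac1n\bigl(\sum_l e_{il}-\sum_l e_{jl}\bigr)=\tfrac1n\sum_{k=1}^n(e_{ik}+e_{kj})$, as required.

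The one genuinely nontrivial step is the identification of the $\pi_2$-summand of $\M$ as $\ker r$ (equivalently, fixing the correct sign and normalization of the embedding $\pi_1\hookrightarrow\M$); once that bookkeeping is in place, everything else is a one-line row-sum calculation and the sign conventions when inverting $\widehat T$.
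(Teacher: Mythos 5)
Your proof is correct, but it takes a genuinely different route from the paper's. The paper stays entirely inside the group algebra: it writes $\Cnat$ as the normalized sum $\frac{n-1}{n\cdot n!}\sum_{k\ne l}c_{kl}$ of Young symmetrizers attached to explicit tableaux, checks that $c_{kl}e_{ij}=0$ when $\{k,l\}\cap\{i,j\}=\emptyset$, and evaluates the surviving products $c_{ki}e_{ij}$ and $c_{kj}e_{ij}$ directly. You instead push the computation through the intertwiner $\widehat T$ into the matrix model $\M$, use that the central idempotent acts there as the orthogonal projection onto the $\pi_{(n-1,1)}$-summand (legitimate, since $\varrho$ is unitary and the two constituents are inequivalent and multiplicity-free), and identify that projection via the row-sum map $r$. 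This is sound: your characterization of the two summands as the span of the $V_k$ and as $\ker r$ is exactly the decomposition the paper records later in~\eqref{decomp}, and your formula $\Pnat E_{ij}=\frac1n(V_i-V_j)$ is precisely Lemma~\ref{l:projunits}, so in effect you prove that lemma first and deduce the group-algebra identity from it, whereas the paper proceeds in the opposite order and keeps the two computations independent. What your route buys is a much lighter calculation (a one-line row-sum check replaces the manipulation of Young symmetrizers) at the cost of a forward dependence on the matrix realization; the paper's route buys a self-contained group-algebra derivation that does not presuppose the explicit description of $P_1\M$ and $P_2\M$. The only step worth flagging is the identification of $\ker r$ with the $\pi_2$-summand: as you note, it needs the multiplicity-one decomposition $\M\cong\pi_1\oplus\pi_2$ (so that an invariant complement of the $\pi_1$-summand is forced to be the $\pi_2$-summand), which the paper has already stated, so there is no gap.
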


\cellsize=0.8em

\begin{proof}
For $k\ne l$, denote by $c_{kl}$ the Young symmetrizer corresponding to a Young tableau of the form
$\lower1\cellsize\vbox{\footnotesize
\cput(1,1){$k$}
\cput(1,2){}
\cput(1,3){$\ldots$}
\cput(2,1){$l$}
 \cells{
 _ _ _ _ 
|_|_|_|_|
|_|}}$
(the order of the other elements in the first row is irrelevant). It is not difficult to see that
$$
\Cnat=\frac{n-1}{n\cdot n!}\sum_{k\ne l}c_{kl}.
$$
Now one can deduce from the properties of the pseudomatrix units $e_{ij}$ that $c_{kl}e_{ij}=0$ for $\{k,l\}\cap\{i,j\}=\emptyset$ and 
$$
\sum_{k\ne i}c_{ki}e_{ij}=n\cdot(n-2)!\sum_{k=1}^ne_{ik},\quad \sum_{k\ne j}c_{kj}e_{ij}=n\cdot(n-2)!\sum_{k=1}^ne_{kj},
$$
which implies the desired formula.
\end{proof}
 
Since the representation of $\sn$ in the space $\M$ is the sum of two irreducible representations, a unitary isomorphism between the representations in $\H$ and $\M$ can differ from the intertwining operator $\widehat T$ only in that the projections of the elements
$e_{ij}$ to each of the two irreducible components are multiplied by different coefficients, and these coefficients can be found from the condition that the images of the pseudomatrix units should be orthonormal. Namely, consider in $\C[\sn]$ the operator
$$
A
=\frac{\sqrt{3}}{\sqrt{n+1}}(\Cnat+\sqrt{n+1}\Chook)
$$
and put
$$
e'_{ij}=Ae_{ij}.
$$

\begin{theorem}\label{th:isom}
The operator $T:\H\to\M$ defined by the formula
$$
Te_{ij}'=E_{ij},\qquad 1\le i<j\le n,
$$
is a unitary isomorphism of the representations of the symmetric group $\sn$ in the spaces $\H$ and $\M$.
\end{theorem}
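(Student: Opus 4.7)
The plan is to identify $T$ with the composition $\widehat{T}\circ A^{-1}$, where $\widehat{T}$ is the (non-unitary) intertwiner of Lemma~\ref{l:isom}, and then to read off the definition of~$A$ as exactly the rescaling required to make this composition an isometry on each irreducible summand.

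Since $\Cnat$ and $\Chook$ are central idempotents in $\C[\sn]$, and $\H\cong\pi_1\oplus\pi_2$ by Lemma~\ref{l:isom}, multiplication by them yields mutually orthogonal projections (orthogonal with respect to the standard inner product on $\C[\sn]$) of $\H$ onto two irreducible subrepresentations $\H^{(1)}:=\Cnat\H$ and $\H^{(2)}:=\Chook\H$ of types $\pi_1$ and $\pi_2$ respectively. The intertwiner $\widehat{T}$ carries $\H^{(a)}$ onto the corresponding irreducible summand of $\M$. Because both inner products are $\sn$-invariant, Schur's lemma forces $\widehat{T}|_{\H^{(a)}}=\mu_a U_a$ for some unitary $U_a$ and positive scalar~$\mu_a$. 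Consequently $T:=\widehat{T}\circ A^{-1}$ still intertwines the actions (as $A$ is central, $A^{-1}$ commutes with $\Reg_l$ on $\H$), and is an isometry precisely when $A$ acts on $\H^{(a)}$ by multiplication by~$\mu_a$. Since the definition of $A$ gives exactly the multipliers $\sqrt{3/(n+1)}$ and $\sqrt{3}$ on $\H^{(1)}$ and $\H^{(2)}$, everything reduces to computing $\mu_1=\sqrt{3/(n+1)}$ and $\mu_2=\sqrt{3}$; the identity $Te'_{ij}=E_{ij}$ is then just $T=\widehat{T}\circ A^{-1}$ unwound.

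To extract $\mu_1$, I would test on the concrete element $x=\Cnat e_{12}=\frac{1}{n}\sum_{k}(e_{1k}+e_{k2})$ supplied by the preceding lemma. A careful expansion of $\|x\|^2$, using $e_{ii}=0$, $e_{k2}=-e_{2k}$ for $k>2$, and the piecewise formula for $\langle e_{ij},e_{kl}\rangle$, yields $\|x\|^2=2(n+1)/(3n)$, while orthonormality of the matrix units gives $\|\widehat{T}x\|^2=\|\tfrac1n\sum_k(E_{1k}+E_{k2})\|^2=2/n$; their ratio is $3/(n+1)$. The value $\mu_2^2=3$ then drops out of the two Pythagorean identities $\|e_{12}\|^2=\|\Cnat e_{12}\|^2+\|\Chook e_{12}\|^2=1$ and $\|E_{12}\|^2=\|\Pnat E_{12}\|^2+\|\Phook E_{12}\|^2=1$ applied to $\Chook e_{12}$. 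The only genuine obstacle is the bookkeeping in the norm of $\Cnat e_{12}$, which splits into contributions from the three possible ways in which two pseudomatrix units can share an index; once that is carried out, the remaining steps are automatic.
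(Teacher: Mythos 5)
Your argument is correct, and it takes a genuinely different (more structural) route than the paper. The paper's proof states that $T$ obviously intertwines and then verifies unitarity head-on: it computes all the pairwise inner products $\langle \Cnat e_{ij},\Cnat e_{kl}\rangle$ and $\langle \Chook e_{ij},\Chook e_{kl}\rangle$ for every pattern of index overlap (disjoint, one common index in the same position, one common index in opposite positions) and then checks by direct calculation that the system $\{e'_{ij}\}$ is orthonormal. You instead exploit that $\H\cong\pi_1\oplus\pi_2$ is multiplicity-free, so Schur's lemma forces $\widehat T$ to act on each isotypic piece as a positive scalar times a unitary; this reduces the whole problem to computing the two ratios $\mu_a^2=\|\widehat T x\|^2/\|x\|^2$ on a single well-chosen test vector per component. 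Your numbers check out: $\|\Cnat e_{12}\|^2=\tfrac{2(n+1)}{3n}$, $\|\Pnat E_{12}\|^2=\tfrac2n$, giving $\mu_1^2=\tfrac3{n+1}$, and the two Pythagorean identities give $\mu_2^2=\tfrac{(n-2)/n}{(n-2)/(3n)}=3$ (vacuous but harmless for $n=2$, where $\Chook\H=0$), which are exactly the multipliers built into $A$. The paper actually gestures at your Schur-type reasoning in the paragraph preceding the theorem ("a unitary isomorphism can differ from $\widehat T$ only in that the projections are multiplied by different coefficients") but does not use it in the proof itself. What the paper's computation buys is the explicit table of inner products $\langle \Cnat e_{ij},\Cnat e_{kl}\rangle$, which is reused later; what your approach buys is far less bookkeeping --- one norm instead of a full Gram matrix --- and a conceptual explanation of where the coefficients $\sqrt{3/(n+1)}$ and $\sqrt3$ in $A$ come from.
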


\begin{proof}
It is obvious from above that $T$ is an intertwining operator. It remains to show that it is unitary, i.e., that the system of elements
$\{e'_{ij}\}$ in $\C[\sn]$ is orthonormal. Using Lemma~\ref{l:Ysymm}, it is not difficult to calculate that for two different pseudomatrix units
$e_{ij}$ and $e_{kl}$ we have
$$
\langle\Cnat e_{ij},\Cnat e_{kl}\rangle=\begin{cases}
0,&\{k,l\}\cap\{i,j\}=\emptyset,\\
\frac{n+1}{3n},&i=k, j\ne l\mbox{ or } j=l,i\ne k,\\
-\frac{n+1}{3n},&i=l,\;j\ne k\mbox{ or }j=k,\; i\ne l.
\end{cases}
$$
Since $\langle\Chook e_{ij},\Chook e_{kl}\rangle=\langle e_{ij},e_{kl}\rangle-\langle\Cnat e_{ij},\Cnat e_{kl}\rangle$, we obtain that
$$
\langle\Chook e_{ij},\Chook e_{kl}\rangle=\begin{cases}
0,&\{k,l\}\cap\{i,j\}=\emptyset,\\
-\frac{1}{3n},&i=k, j\ne l\mbox{ or } j=l,i\ne k,\\
\frac{1}{3n},&i=l,\;j\ne k\mbox{ or }j=k,\; i\ne l,
\end{cases}
$$
and the fact that the system $\{e'_{ij}\}$ is orthonormal follows by a direct calculation.
\end{proof}

\begin{theorem}\label{th:viaunits}
The centered vectors $\tudes$, $\tumaj$, $\tuinv$ lie in the space $\H$; namely,
$$
\tudes=-\frac{\sqrt{n!}}2\sum_{k=1}^{n-1}e_{k,k+1},\quad\tumaj=-\frac{\sqrt{n!}}2\sum_{k=1}^{n-1}ke_{k,k+1},\quad
\tuinv=-\frac{\sqrt{n!}}2\sum_{1\le i<j\le n}e_{ij}.
$$
\end{theorem}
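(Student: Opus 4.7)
The plan is to rewrite each of $\des$, $\maj$, $\inv$ as a linear combination of indicator functions $\mathbf{1}[\sigma(i)>\sigma(j)]$ and then observe that
$$
\mathbf{1}[\sigma(i)>\sigma(j)] \;=\; \frac{1-\eps_{ij}(\sigma)}{2}.
$$
This is the only nontrivial identity needed; everything else is bookkeeping. Since $\sum_{\sigma} \eps_{ij}(\sigma)\sigma = \sqrt{n!}\, e_{ij}$ by the definition of the pseudomatrix units, each of the three statistics becomes a constant plus a linear combination of the $e_{ij}$, and the constant is exactly the one subtracted off when forming the centered version.

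Carrying this out, I would first handle $\inv$, which is the cleanest case:
$$
\inv(\sigma)=\sum_{i<j}\mathbf{1}[\sigma(i)>\sigma(j)]=\frac{n(n-1)}{4}-\frac12\sum_{i<j}\eps_{ij}(\sigma),
$$
so $\widetilde{\inv}(\sigma)=-\tfrac12\sum_{i<j}\eps_{ij}(\sigma)$, and summing against $\sigma$ in the group algebra gives the stated expression for $\tuinv$. For $\des$, I would apply the same identity pairwise only to consecutive indices:
$$
\des(\sigma)=\sum_{k=1}^{n-1}\mathbf{1}[\sigma(k)>\sigma(k+1)]=\frac{n-1}{2}-\frac12\sum_{k=1}^{n-1}\eps_{k,k+1}(\sigma),
$$
which yields $\tudes$ after centering. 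For $\maj$, the same computation weighted by $k$ produces the claimed expression; here the constant term works out to $\sum_{k=1}^{n-1} k/2 = n(n-1)/4$, exactly matching the centering constant given in Section~\ref{sec:func}.

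Finally, I would remark that because each $\widetilde{u}_\eps$ is manifestly a linear combination of the elements $e_{ij}$, it lies in $\H$, which is the first assertion of the theorem; no separate argument is needed. There is no real obstacle in the proof --- the only thing to check carefully is the matching of the constant $\frac{n(n-1)}{4}$ (resp.\ $\frac{n-1}{2}$) with the one used to define $\widetilde{\maj}$, $\widetilde{\inv}$ (resp.\ $\widetilde{\des}$), and the normalization $\tfrac{1}{\sqrt{n!}}$ in the definition of $e_{ij}$, which is what produces the prefactor $\tfrac{\sqrt{n!}}{2}$ in the final formulas.
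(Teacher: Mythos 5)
Your proof is correct and is precisely the "straightforward calculation" the paper leaves to the reader: the identity $\mathbf{1}[\sigma(i)>\sigma(j)]=\tfrac{1-\eps_{ij}(\sigma)}{2}$, applied to the defining sums of $\inv$, $\des$, $\maj$, immediately yields the stated formulas, with the constants matching those in~\eqref{constants} used to define the centered statistics. Nothing further is needed.
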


\begin{proof}
Follows by a straightforward calculation. 
\end{proof}

\begin{corollary}
Each of the vectors $u_\eps$, where $\eps=\des,\maj,\inv$, generates in $\C[\sn]$ the same principal right ideal  ${\cal I}=\H\oplus\{{\rm const}\}$. The restriction of the left regular representation $\Reg_l$ to ${\cal I}$ is the sum of three irreducible representations
$\pi_{(n)}\oplus\pi_{(n-1,1)}\oplus\pi_{(n-2,1^2)}$.
\end{corollary}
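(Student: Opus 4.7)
I plan to show $\Ide(u_\eps)=\H\oplus\{\mathrm{const}\}$ by sandwiching it between these two subspaces.

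For the upper bound, decompose $u_\eps=\tilde u_\eps+c_\eps\Sigma$, where $\Sigma=\sum_{g\in\sn}g$ spans the trivial isotypic component $\{\mathrm{const}\}$ and $c_\eps=(n!)^{-1}\sum_g\eps(g)\neq 0$ by~\eqref{constants}. Theorem~\ref{th:viaunits} places $\tilde u_\eps$ in $\H$, and by Lemma~\ref{l:isom} the subspace $\H$ is $\sn$-invariant. Both summands being invariant yields $\Ide(u_\eps)\subseteq\H\oplus\{\mathrm{const}\}$.

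The right-hand side decomposes, via Lemma~\ref{l:isom} together with the known splitting $\M\cong\pi_{(n-1,1)}\oplus\pi_{(n-2,1^2)}$, as
\[
\H\oplus\{\mathrm{const}\}\;\cong\;\pi_{(n)}\oplus\pi_{(n-1,1)}\oplus\pi_{(n-2,1^2)},
\]
a multiplicity-free sum of three pairwise nonisomorphic irreducibles. In any such module every invariant subspace is the direct sum of some subcollection of the simple summands; hence $\Ide(u_\eps)$ is entirely determined by which of the three components receives a nonzero projection from $u_\eps$. The corollary thus reduces to three non-vanishing checks. The projection onto $\pi_{(n)}$ equals $c_\eps\neq 0$, and the projections onto the other two irreducibles coincide with those of $\tilde u_\eps$ since $u_\eps-\tilde u_\eps\in\C\Sigma$.

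To verify the remaining two non-vanishings, I would transport $\tilde u_\eps$ into $\M$ through the intertwiner $\widehat T$ of Lemma~\ref{l:isom} and work with the canonical decomposition $\M=V_0\oplus\Lambda^2 V_0$, where $V_0\cong\pi_{(n-1,1)}$ is realized as matrices of the form $(v_i-v_j)_{i,j}$ with $v\in V_0$. The $V_0$-component of a skew-symmetric matrix is proportional to its row-sum vector, and the $\Lambda^2 V_0$-component vanishes exactly when the matrix itself has this difference form. Using the explicit expressions of Theorem~\ref{th:viaunits}, one checks in each of the three cases that the row-sum vector is nonzero (e.g.\ equal to $\tfrac12\sqrt{n!}(-1,0,\ldots,0,1)$ for $\des$) and that the $(1,3)$-entry contradicts the value that would be forced by the $(1,2)$- and $(2,3)$-entries of any difference matrix (for $n\geq 3$). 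This case-by-case verification, requiring only the explicit formulas from Theorem~\ref{th:viaunits}, is the only real obstacle, and it is a short calculation.
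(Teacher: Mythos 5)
Your argument is correct and takes essentially the route the paper intends: the corollary is stated there without proof as an immediate consequence of Lemma~\ref{l:isom}, Theorem~\ref{th:viaunits}, and multiplicity-freeness of $\H\oplus\{{\rm const}\}$, with the required nonvanishing of the projections onto $\pi_{(n-1,1)}$ and $\pi_{(n-2,1^2)}$ visible from the explicit matrices $P_1h_\eps$, $P_2h_\eps$ computed in Lemma~\ref{l:proj}. Your shortcut for the second nonvanishing (the $(1,3)$-entry of $h_\eps$ failing the difference-form constraint forced by the $(1,2)$- and $(2,3)$-entries) is a slightly more economical check than writing out the full projections, but it is the same underlying idea.
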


Consider in $\M$ the matrices
\bea
&{\scriptsize\hdes=\sum\limits_{k=1}^{n-1}E_{k,k+1}=\begin{pmatrix}
0 & 1&0&\ldots&0\\
   &0          &1         &\ldots&0\\
    &          &0         &\ldots&0\\
    &          &         &\ldots&\\
   &          &         &0&1\\
   &          &         &&0
\end{pmatrix},}\quad
{\scriptsize\hmaj=\sum\limits_{k=1}^{n-1}kE_{k,k+1}=\begin{pmatrix}
0 & 1&0&\ldots&0\\
   &0          &2         &\ldots&0\\
    &          &0         &\ldots&0\\
    &          &         &\ldots&\\
   &          &         &0&n-1\\
   &          &         &&0
\end{pmatrix},} &\\
&{\scriptsize \hinv=\sum\limits_{i<j}E_{ij}=\begin{pmatrix}
0 & 1&1&\ldots&1\\
   &0          &1         &\ldots&1\\
    &          &0         &\ldots&1\\
    &          &         &\ldots&\\
   &          &         &0&1\\
   &          &         &&0
\end{pmatrix}}&
\eea
(when writing matrices, we always indicate only their upper triangular parts, meaning that the lower triangular parts can be recovered from the skew symmetry). Denote $c_n=-\frac{\sqrt{n!}}2$. It follows from Theorems~\ref{th:viaunits}  and~\ref{th:isom} that
\beq\label{uintertw}
\widehat Tu_{\widetilde\eps}=c_nh_\eps,\qquad Tu_{\tilde\eps}=\frac{c_n}{\sqrt{3}}(\sqrt{n+1}\Pnat h_\eps+\Phook  h_\eps).
\eeq

\begin{lemma}\label{l:projunits} 
For $k<m$, the projection of the matrix unit $E_{km}$ to the natural representation is given by the formula
$$
\Pnat E_{km}=(a_{ij})_{i,j=1}^n, \mbox{ where } a_{ij}=\al_i^{(km)}-\al_j^{(km)}\mbox{ and } \al_j^{(km)}=\begin{cases}
\frac1n,&j=k,\\
-\frac1n,&j=m,\\
0&\mbox{otherwise}.
\end{cases}
$$
\end{lemma}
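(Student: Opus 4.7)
The plan is to identify the natural-representation subspace of $\M$ explicitly and then compute the orthogonal projection of $E_{km}$ onto it. I work with the map $v \mapsto A(v)$ sending a vector $v \in \C^n$ to the skew-symmetric matrix with entries $(A(v))_{ij} := v_i - v_j$. This map is $\sn$-equivariant (simultaneous row/column permutation of $A(v)$ corresponds to the standard permutation action on $v$), its kernel is the line of constant vectors, and its image restricted to the centered subspace $V_0 := \{v \in \C^n : \sum_j v_j = 0\}$ is an $(n-1)$-dimensional $\sn$-invariant subspace of $\M$ isomorphic to $\pi_1 = \pi_{(n-1,1)}$ in its standard realization. Since $\M \cong \pi_1 \oplus \pi_2$ with multiplicity one, this subspace must coincide with the image of $\Pnat$. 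In particular $\Pnat E_{km}$ is automatically of the form $A(\alpha)$ for some $\alpha \in V_0$, and the lemma reduces to pinning down this $\alpha$.

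To identify $\alpha$, I realize $\Pnat E_{km}$ as the orthogonal projection of $E_{km}$ onto the above subspace in the inner product~\eqref{prodinm}. For $\alpha \in V_0$, expand
$$
\|A(\alpha) - E_{km}\|^2 = \sum_{i<j}(\alpha_i - \alpha_j)^2 - 2(\alpha_k - \alpha_m) + 1 = n \sum_j \alpha_j^2 - 2(\alpha_k - \alpha_m) + 1,
$$
where the second equality uses the identity $\sum_{i<j}(\alpha_i - \alpha_j)^2 = n \sum_j \alpha_j^2 - (\sum_j \alpha_j)^2$ together with $\sum_j \alpha_j = 0$. A one-line Lagrange multiplier calculation on $V_0$ shows that the minimum is attained at $\alpha_j = \tfrac{1}{n}(\delta_{jk} - \delta_{jm})$, which is exactly the $\alpha_j^{(km)}$ in the statement.

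As a shortcut one could bypass the optimization by symmetry. The subgroup of $\sn$ fixing $k$ and $m$ pointwise stabilizes $E_{km}$, hence must stabilize $\Pnat E_{km}$ by equivariance; this forces $\alpha^{(km)}$ to be constant on $\{1,\ldots,n\}\setminus\{k,m\}$. The transposition $(k\,m)$ sends $E_{km}$ to $-E_{km}$, which forces $\alpha_k^{(km)} = -\alpha_m^{(km)}$. Together with the centering $\sum_j \alpha_j^{(km)} = 0$, these constraints determine $\alpha^{(km)}$ up to a single scalar, which is fixed by computing one inner product (for instance $\langle \Pnat E_{km}, E_{km}\rangle = \|\Pnat E_{km}\|^2$). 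I do not foresee a real obstacle: once the natural-representation subspace is correctly identified as the space of \emph{gradient} matrices $A(v)$, the projection reduces either to a short quadratic minimization or, via symmetry, to a one-parameter calculation. The only care needed is with the normalization $\tfrac12\Tr(AB)$ in~\eqref{prodinm}, which is what produces the factor $1/n$.
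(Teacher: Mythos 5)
Your proof is correct and follows essentially the same route as the paper: both rest on identifying $\Pnat\M$ as the space of matrices of the form $(\al_i-\al_j)$ with $\sum_j\al_j=0$ (the paper's display~\eqref{decomp}) and then computing the orthogonal projection of $E_{km}$ onto that subspace. You merely supply the details (the equivariance argument identifying the subspace and the explicit minimization) that the paper leaves to the reader with ``it is not difficult to find the desired projection.''
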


\begin{proof}
It is well known that
\beq\label{decomp}
\begin{aligned}
P_1\M&=\{M=(a_{ij}), \mbox{ where }a_{ij}=\al_i-\al_j \mbox{ and } (\al_j)\in\mathbb R^n,\sum\al_j=0\}, \\
P_2\M&=\{M=(a_{ij}): a_{ji}=-a_{ij},\;\sum_ia_{ij}=0 \mbox{ for every }j\}.
\end{aligned}
\eeq
Using this fact, it is not difficult to find the desired projection.
\end{proof}

\begin{lemma}\label{l:proj}
\bea{\scriptsize
\Pnat\hdes=\frac{1}n\begin{pmatrix}
0 & 1&1&\ldots&1&2\\
   &0          &0         &\ldots&0&1\\
    &          &0         &\ldots&0&1\\
    &          &         &\ldots&&\\
   &          &         &0&0&1\\
   &          &         &&0&1\\
   &          &         &&&0
\end{pmatrix},}\quad
{\scriptsize\Phook\hdes=\frac{1}n\begin{pmatrix}
0 & n-1&-1&-1&\ldots&-1&-2\\
   &0          &n    &0     &\ldots&0&-1\\
    &          &0     &n    &\ldots&0&-1\\
    &          &       &  &\ldots&&\\
    &          &        & &0&n&-1\\
   &          &        & &&0&n-1\\
   &          &        & &&&0
\end{pmatrix};}\\
{\scriptsize\Pnat\hmaj=\begin{pmatrix}
0 & 0&\ldots&0&1\\
   &0                   &\ldots&0&1\\
    &                   &\ldots&0&1\\
    &                   &\ldots&&\\
   &                   &&0&1\\
   &                   &&&0
\end{pmatrix},}\quad
{\scriptsize\Phook\hmaj=\begin{pmatrix}
0 & 1&0&0&\ldots&0&-1\\
   &0          &2    &0     &\ldots&0&-1\\
    &          &0     &3    &\ldots&0&-1\\
    &          &       &  &\ldots&&\\
    &          &        & &0&n-2&-1\\
   &          &        & &&0&n-2\\
   &          &        & &&&0
\end{pmatrix};}
\eea 
$$
\Pnat\hinv=A,\quad 
\Phook\hinv=B,
$$ where $A=(a_{ij})$ and $B=(b_{ij})$ are Toeplitz matrices with the entries
\beq\label{AB}
a_{i,i+k}=\frac{2k}n,\quad b_{i,i+k}=1-\frac{2k}n.
\eeq
\end{lemma}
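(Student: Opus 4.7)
The plan is to reduce everything to Lemma~\ref{l:projunits} and linearity, and then to get the $P_2$ projections for free from the identity $P_2 h = h - P_1 h$ on $\M$ (which holds because $P_1 + P_2 = \operatorname{id}_\M$). Write $h = \sum_{k<m} c_{km} E_{km}$ for the appropriate coefficients $c_{km}$, and set $\alpha^{(km)}$ as in Lemma~\ref{l:projunits}, so that
\[
P_1 h \;=\; \sum_{k<m} c_{km}\,(\alpha_i^{(km)} - \alpha_j^{(km)})_{i,j=1}^n \;=\; (\alpha_i - \alpha_j)_{i,j=1}^n,\qquad \alpha \;:=\; \sum_{k<m} c_{km}\,\alpha^{(km)}\in\R^n.
\]
Thus the entire computation of $P_1 h$ reduces to computing the single vector $\alpha$ coordinatewise, after which $P_2 h$ is obtained by subtraction.

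For $\hdes$, the coordinate $\alpha_j$ collects the contribution $+1/n$ from the term $k=j$ (present iff $j\le n-1$) and $-1/n$ from the term $k+1=j$ (present iff $j\ge 2$). These telescope to $\alpha = \tfrac1n(1,0,\ldots,0,-1)$; inserting into $(\alpha_i-\alpha_j)$ reproduces the stated form of $P_1\hdes$, and subtracting from $\hdes$ gives $P_2\hdes$. For $\hmaj$, the same bookkeeping with $c_{k,k+1}=k$ gives $\alpha_j = j/n - (j-1)/n = 1/n$ for $2\le j\le n-1$, $\alpha_1 = 1/n$, $\alpha_n = -(n-1)/n$, whence $P_1\hmaj$ has zeros off the last column (and the last column equals $1$), and again $P_2\hmaj = \hmaj - P_1\hmaj$. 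For $\hinv$ the coordinate $\alpha_k$ is $(n-k)\cdot\tfrac1n + (k-1)\cdot(-\tfrac1n) = (n-2k+1)/n$, so $\alpha_i-\alpha_j = 2(j-i)/n$, giving the Toeplitz matrix $A$ with $a_{i,i+k} = 2k/n$; subtracting from $\hinv$ (whose strictly upper-triangular entries all equal $1$) yields $b_{i,i+k} = 1 - 2k/n$.

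The only nontrivial point is to verify that the vector $\alpha$ computed in each of the three cases has zero sum, as required by the first line of~\eqref{decomp}; this is immediate (for $\hdes$ by inspection, for $\hmaj$ because $1\cdot(n-1)\cdot\tfrac1n = (n-1)/n$, and for $\hinv$ because $\sum_{k=1}^n(n-2k+1) = 0$). Equivalently, one may observe at the start that $P_1\M$ is precisely the image of the map $(\alpha_j)\mapsto(\alpha_i-\alpha_j)$ on the hyperplane $\sum\alpha_j=0$, so the formulas above automatically lie in $P_1\M$, which ensures correctness.

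The computations are all elementary; the main bookkeeping obstacle is just the telescoping in the $\hmaj$ case, where the weight $k$ depends on the summation index, and the verification that all off-diagonal entries of $P_1\hmaj$ outside the last row/column indeed vanish --- which is precisely the telescoping identity $k/n - (k-1)/n = 1/n$ combined with the boundary contributions at $j=1$ and $j=n$.
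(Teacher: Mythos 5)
Your proposal is correct and follows exactly the route the paper indicates (the paper's proof is the one-line remark that the lemma ``easily follows from Lemma~\ref{l:projunits} or directly from~\eqref{decomp}''); you have simply carried out the linearity-plus-telescoping computation of the vector $\alpha$ and obtained $\Phook h=h-\Pnat h$ by subtraction, and all the resulting entries match the stated matrices.
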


\begin{proof}
Easily follows from Lemma~\ref{l:projunits} or directly from~\eqref{decomp}.
\end{proof}


Denote by $P$ the orthogonal projection in $\C[\sn]$ to the subspace $\H$, and let $h_0=TP\de_e$. 

\begin{lemma}
$$
h_0=\sqrt{\frac3{(n+1)!}}\left(A+\sqrt{n+1}\cdot B\right),
$$
where the Toeplitz matrices $A$ and $B$ are given by~\eqref{AB}.
\end{lemma}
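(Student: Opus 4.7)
The plan is to compute $h_0$ entry by entry using the unitarity of $T$. Since $Te'_{ij}=E_{ij}$ and $\{E_{ij}\}_{i<j}$ is an orthonormal basis of $\M$, the $(i,j)$-entry of $h_0=TP\de_e$ is
\[
(h_0)_{ij}=\langle TP\de_e,Te'_{ij}\rangle_{\M}=\langle P\de_e,e'_{ij}\rangle=\langle\de_e,e'_{ij}\rangle=(e'_{ij})_e\qquad(1\le i<j\le n),
\]
where $(f)_e$ denotes the coefficient of the identity in $f$. As the irreducible characters of $\sn$ are real, $\Cnat$ and $\Chook$ are self-adjoint, hence so is $A$; combined with $A\de_e=A$ this gives $(e'_{ij})_e=\langle e_{ij},A\rangle=\frac{\sqrt{3}}{\sqrt{n+1}}\langle e_{ij},\Cnat\rangle+\sqrt{3}\,\langle e_{ij},\Chook\rangle$. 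Plugging $C_\la=(\dim\pi_\la/n!)\sum_g\chi_\la(g)g$ and the definition of $e_{ij}$ into each inner product reduces the task to the character sum
\[
S_\la(i,j)\;:=\;\sum_{g\in\sn}\eps_{ij}(g)\chi_\la(g),\qquad\la\in\{(n-1,1),(n-2,1^2)\},
\]
via $\langle e_{ij},C_\la\rangle=\frac{\dim\pi_\la}{n!\sqrt{n!}}\,S_\la(i,j)$.

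Next I would evaluate the two character sums. For $\la=(n-1,1)$, use $\chi_{(n-1,1)}=\fix-1$: the involution $g\leftrightarrow g\cdot(ij)$ reverses $\eps_{ij}$, and so it kills $\sum_g\eps_{ij}(g)$ immediately and, applied inside the stabilizer of any $k\notin\{i,j\}$, also kills the $k$-contribution to $\sum_g\eps_{ij}(g)\fix(g)$. What survives is the sum over $\mathrm{Stab}(i)$ and $\mathrm{Stab}(j)$, which a direct count evaluates to $(n-2)!(n-2i+1)$ and $(n-2)!(2j-n-1)$ respectively, for a total of $S_{(n-1,1)}(i,j)=2(n-2)!(j-i)$. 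For $\la=(n-2,1^2)$, write $\chi_{(n-2,1^2)}=\chi_\varrho-\chi_{(n-1,1)}$ with the combinatorial identity $\chi_\varrho(g)=\binom{\fix(g)}{2}-\mathrm{cyc}_2(g)$, read off from the action $\varrho(g)E_{kl}=E_{g(k),g(l)}$; a parallel case analysis on $|\{k,l\}\cap\{i,j\}|$, using the same $(ij)$-pairing to knock out the disjoint contributions, then yields $S_{(n-2,1^2)}(i,j)=2(n-3)!(n-2(j-i))$.

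Substituting $\dim\pi_{(n-1,1)}=n-1$ and $\dim\pi_{(n-2,1^2)}=(n-1)(n-2)/2$ and collapsing the factorials, the $\Cnat$-term becomes $\sqrt{3/(n+1)!}\cdot 2(j-i)/n$ and the $\Chook$-term becomes $\sqrt{3/n!}\cdot(1-2(j-i)/n)=\sqrt{n+1}\,\sqrt{3/(n+1)!}\cdot(1-2(j-i)/n)$, which are precisely $\sqrt{3/(n+1)!}\,a_{ij}$ and $\sqrt{3/(n+1)!}\,\sqrt{n+1}\,b_{ij}$ from~\eqref{AB}. Summing yields $h_0=\sqrt{3/(n+1)!}\,(A+\sqrt{n+1}\,B)$. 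The hard part will be the second character sum: tracking the three pieces $\binom{\fix}{2}$, $\mathrm{cyc}_2$, $\fix$ separately through the overlap case analysis is bookkeeping-heavy. A cleaner variant I would probably adopt in a write-up is to compute $S_\varrho(i,j)=\sqrt{n!}\,\Tr(\varrho(e_{ij}))$ as a whole from $\varrho(e_{ij})E_{kl}=(n!)^{-1/2}\sum_\sigma\eps_{ij}(\sigma)E_{\sigma(k),\sigma(l)}$ and then subtract $S_{(n-1,1)}$; the $(ij)$-pairing then applies uniformly and the sum collapses quickly.
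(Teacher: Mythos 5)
Your proposal is correct, but the computational core is genuinely different from the paper's. Both arguments begin the same way: expand $P\de_e$ in the orthonormal system $\{e'_{ij}\}$, so that the $(i,j)$ entry of $h_0$ is $\langle\de_e,e'_{ij}\rangle$. At that point the paper substitutes the explicit expression of $e'_{km}$ as a linear combination of the pseudomatrix units $e_{ij}$ (obtained by pulling $\Pnat E_{km}$ and $\Phook E_{km}$ from Lemma~\ref{l:projunits} back through $\widehat T^{-1}$) and finishes with the one-line observation $\langle\de_e,e_{ij}\rangle=1/\sqrt{n!}$; you instead use self-adjointness of the central idempotents to move the operator $A$ onto $\de_e$ and reduce everything to the character sums $S_\la(i,j)=\sum_g\eps_{ij}(g)\chi_\la(g)$, which you evaluate combinatorially via the involution $g\mapsto g\cdot(ij)$. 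Your intermediate values check out: $S_{(n-1,1)}(i,j)=2(n-2)!(j-i)$ and $S_{(n-2,1^2)}(i,j)=2(n-3)!\bigl(n-2(j-i)\bigr)$ are correct (I verified them directly for small $n$ and confirmed that they reassemble into $\sqrt{3/(n+1)!}\,a_{ij}$ and $\sqrt{3/(n+1)!}\,\sqrt{n+1}\,b_{ij}$), and the self-adjointness step is legitimate since the relevant characters are real. The trade-off is clear: the paper's route is essentially free once Lemma~\ref{l:projunits} is available, whereas yours bypasses that lemma entirely at the cost of the heavier character bookkeeping you yourself flag (your suggested shortcut of computing $\Tr\varrho(e_{ij})$ as a whole and subtracting $S_{(n-1,1)}$ is indeed the sensible way to organize it); as a by-product your method yields the closed forms for $\sum_g\eps_{ij}(g)\chi_\la(g)$, which have some independent interest.
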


\begin{proof}
Since $\{e_{km}'\}$ is an orthonormal system in $\H$, we can write $P\de_e=\sum_{km}\langle\de_e,e_{km}'\rangle e_{km}'$. 
Using the above results, we obtain
\begin{multline*}
e_{km}'=\sqrt{\frac{3}{n+1}}\widehat T^{-1}(\Pnat E_{km}+\sqrt{n+1}\Phook E_{km})\\
=\sqrt{\frac{3}{n+1}}\left(\sum_{i<j}(\al_i^{km}-\al_j^{km})E_{ij}+\sqrt{n+1}\sum_{i<j}(\de_{km,ij}-\al_i^{km}+\al_j^{km})E_{ij}\right)\\
=\sqrt{\frac{3}{n+1}}\left(\sqrt{n+1}e_{km}-\sum_{i<j}(\sqrt{n+1}-1)(\al_i^{km}-\al_j^{km})e_{ij}\right).
\end{multline*}
It remains to observe that $\langle\de_e,e_{ij}\rangle=\frac1{\sqrt{n!}}$ for $i<j$, and the desired result follows by straightforward calculations.
\end{proof}

Now we are in a position to prove our key theorem on the representation-theoretic meaning of the combinatorial statistics $\maj,\des,\inv$.

\begin{theorem}\label{th:main}
For every element $f=\sum_{\si\in\sn} c_\si\si\in \H$,
$$
c_\si=\frac1{\sqrt{n!}}\langle\varrho(\si)\hinv,\widehat Tf\rangle.
$$
In particular, for every permutation $\si\in\sn$, the major index $\maj(\si)$, the descent number $\des(\si)$, and the inversion number $\inv(\si)$ 
can be calculated by the following ``matrix'' formulas:
\begin{eqnarray}
\maj(\si)&=&\frac{n(n-1)}4-\frac12\langle\varrho(\si)\hinv,\hmaj\rangle,\label{maj}\\
\des(\si)&=&\frac{n-1}2-\frac12\langle\varrho(\si)\hinv,\hdes\rangle,\\
\inv(\si)&=&\frac{n(n-1)}4-\frac12\langle\varrho(\si)\hinv,\hinv\rangle,\label{inv}
\end{eqnarray}
where $\varrho$ is the representation of the group $\sn$ in the space $\M$ of $n \times n$ skew-symmetric matrices.
\end{theorem}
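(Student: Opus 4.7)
The plan is to split the argument into two parts: first establish the general coefficient extraction formula $c_\si=\frac1{\sqrt{n!}}\langle\varrho(\si)\hinv,\widehat Tf\rangle$ for an arbitrary $f=\sum c_\si\si\in\H$, and then obtain the three matrix identities~\eqref{maj}--\eqref{inv} as immediate specializations.

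For the general formula, both sides depend linearly on $f$, so it suffices to verify the identity on the spanning family $\{e_{kl}\}_{1\le k<l\le n}$ of $\H$. For $f=e_{kl}$ the $\si$-coefficient is $c_\si=\frac1{\sqrt{n!}}\eps_{kl}(\si)$ directly from the definition of the pseudomatrix units. On the other hand $\widehat Te_{kl}=E_{kl}$, and the inner product formula~\eqref{prodinm} gives $\langle\varrho(\si)\hinv,E_{kl}\rangle=(\varrho(\si)\hinv)_{kl}$. Since $\varrho(\si)$ acts on $\M$ by simultaneous permutation of rows and columns and $\hinv$ is $+1$ above and $-1$ below the diagonal, a direct entry-level computation shows that this $(k,l)$-entry equals $\eps_{kl}(\si)$. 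This matches the coefficient, so the general formula is established.

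For the matrix identities, I would apply the formula just proved to $f=\tumaj,\tudes,\tuinv$. By Theorem~\ref{th:viaunits} each of these lies in $\H$, and their images under $\widehat T$ are $c_n\hmaj$, $c_n\hdes$, $c_n\hinv$ respectively, where $c_n=-\frac{\sqrt{n!}}{2}$ (this is the first relation in~\eqref{uintertw}). The $\si$-coefficient of $u_{\widetilde\eps}$ is by definition $\eps(\si)$ minus the mean values tabulated in~\eqref{constants} (i.e.\ $\frac{n(n-1)}4$ in the cases $\eps=\maj,\inv$ and $\frac{n-1}2$ for $\eps=\des$). Substituting produces the factor $\frac{c_n}{\sqrt{n!}}=-\frac12$ on the right-hand side, and a one-line rearrangement yields~\eqref{maj}--\eqref{inv}.

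The main obstacle will be only sign/convention bookkeeping in the key step $(\varrho(\si)\hinv)_{kl}=\eps_{kl}(\si)$: one must fix once and for all a convention for how the simultaneous row/column permutation acts (equivalently, which of $\si$ or $\si^{-1}$ enters the intertwiner $\widehat T$ from Lemma~\ref{l:isom}), but once this is settled, the identification is immediate from the skew-symmetric all-ones form of $\hinv$. Everything else in the proof is linearity plus substitution.
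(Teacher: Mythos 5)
Your proposal is correct, and it takes a genuinely different route from the paper. The paper proves the coefficient formula by writing $c_\si=\langle\Reg_l(\si)\de_e,f\rangle$, replacing $\de_e$ by its orthogonal projection $P\de_e$ onto $\H$, transporting everything through the \emph{unitary} isomorphism $T$ of Theorem~\ref{th:isom}, and then using the explicit decomposition $h_0=TP\de_e=\sqrt{3/(n+1)!}\,(A+\sqrt{n+1}\,B)$ together with the orthogonality of the $\pi_1$- and $\pi_2$-components; the clean answer emerges because the two rescalings cancel and $A+B=\hinv$. You instead verify the (manifestly linear) identity directly on the spanning family $\{e_{kl}\}$, where it reduces to the single observation that $\langle\varrho(\si)\hinv,E_{kl}\rangle=(\varrho(\si)\hinv)_{kl}=\eps_{kl}(\si)$, because $\hinv$ has entries $\mathrm{sgn}(j-i)$ and $\varrho(\si)$ just permutes indices. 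This bypasses the unitary structure, the element $h_0$, and the orthogonality argument entirely, using only the nonunitary intertwiner $\widehat T$ of Lemma~\ref{l:isom}; the one point you must pin down --- and you correctly flag it --- is the composition/action convention making $(\varrho(\si)\hinv)_{kl}$ equal to $\eps_{kl}(\si)$ rather than $\eps_{kl}(\si^{-1})$, which is forced once one fixes the convention under which $\H$ is $\Reg_l$-invariant. The specialization to $f=\tumaj,\tudes,\tuinv$ via Theorem~\ref{th:viaunits} and the constant $c_n/\sqrt{n!}=-\tfrac12$ is identical to the paper's. Your argument is shorter and more self-contained for this theorem; the paper's detour through $T$, $h_0$, and the projections $\Pnat,\Phook$ pays off later, since those objects are exactly what the orthogonality-of-matrix-elements computations in Theorems~\ref{th:spectra} and~\ref{th:inv} consume.
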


\begin{proof}
Let $f=\sum_{\si\in\sn} c_\si\si\in \H$ be an arbitrary element from $\H$. For every permutation $\si\in\sn$, we have
$c_\si=\langle \de_\si, f\rangle=\langle\Reg_l(\si)\de_e,f\rangle$. Since $P$ is the orthogonal projection to the invariant subspace
$\H$, we have $\langle\Reg_l(\si)\de_e,f\rangle=\langle\Reg_l(\si)P\de_e,f\rangle$. Acting on both sides by the unitary isomorphism
 $T$, we find
\bea
c_\si=\langle T\Reg_l(\si)P\de_e,Tf\rangle=\langle\varrho(\si)TP\de_e,Tf\rangle=\langle\varrho(\si)h_0,Tf\rangle\\
=\sqrt{\frac{3}{(n+1)!}}\left(\langle\varrho(\si)A,\Pnat Tf\rangle+\sqrt{n+1}\langle \varrho(\si)B, \Phook Tf\rangle\right),
\eea
where we have used the orthogonality of the spaces of different representations.
Note that $P_1Tf=\frac{\sqrt{n+1}}{\sqrt3}\Pnat\widehat Tf$, $P_2Tf=\frac{1}{\sqrt3}\Phook\widehat Tf$, whence 
$$
c_\si=\frac1{\sqrt{n!}}\left(\langle\varrho(\si)A,\Pnat\widehat Tf\rangle+\langle \varrho(\si)B, \Phook\widehat Tf\rangle\right)=
\frac1{\sqrt{n!}}\langle\varrho(\si)\hinv,\widehat Tf\rangle,
$$
again by the orthogonality and since $\hinv=A+B$. The remaining part of the theorem follows from~\eqref{uintertw}.
\end{proof}

\section{Corollaries: spectra and convolutions}\label{sec:corrs}

In this section we show how the ``matrix'' formulas for combinatorial functions found in Theorem~\ref{th:main} allow one to easily obtain results on the spectra of elements (in the regular representation), convolutions, etc.

Denote by $M_\eps$, where $\eps=\maj,\inv,\des$, the operator of right multiplication by
 $u_\eps$ in $\C[\sn]$. Since the operators of left and right multiplication in
 $\C[\sn]$ commute, every eigenspace of  $M_\eps$ is an invariant subspace for the left regular representation $\Reg_l$.

\begin{theorem}\label{th:spectra}
Each of the operators $\Mmaj,\Mdes$ has two nonzero eigenvalues: $s^{\eps}_0>0$ and $s^{\eps}_1<0$, where
\bea
s_0^{\rm maj}=n!\cdot\frac{n(n-1)}4,\qquad s_1^{\rm maj}=-\frac{n!}2;\\
s_0^{\rm des}=n!\cdot\frac{n-1}2,\qquad s_1^{\rm des}=-(n-1)!.
\eea
The correspondent eigenspaces for both operators coincide. The subspace corresponding to  $s^{\eps}_0$ is one-dimensional and coincides with the subspace of constants, i.e., with the subspace of the identity subrepresentation $\pi_{(n)}$ in $\Reg_l$. The subspace corresponding to
 $s^{\eps}_1$ has dimension $\frac{n(n-1)}2$ and coincides with the subspace $\H$ introduced above (in particular, the representation in it is isomorphic to the sum of two irreducible representations
$\pi_{(n-1,1)}+\pi_{(n-2,1^2)}$).  
\end{theorem}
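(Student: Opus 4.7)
The plan is to combine Schur's lemma with the matrix formulas of Theorem~\ref{th:main} to reduce the spectral computation to a handful of explicit inner products in $\M$.

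First, since $M_\eps$ commutes with $\Reg_l$, it preserves every isotypic component of $\C[\sn]$. Its image is the principal ideal $\Ide(u_\eps)=\H\oplus\C e_0$ identified in the corollary preceding this section, which contains a single copy of each of $\pi_{(n)}$, $\pi_{(n-1,1)}$, $\pi_{(n-2,1^2)}$. Hence $M_\eps$ vanishes on every other isotypic, and on the $\la$-isotypic $V_\la\otimes V_\la^*$ it is of the form $\mathrm{Id}\otimes\pi_\la(u_\eps)$; since the image contains only one copy of $V_\la$, the operator $\pi_\la(u_\eps)$ has rank one, so its unique nonzero eigenvalue coincides with its trace $\mu_\la=\sum_g\eps(g)\chi_\la(g)$ and appears in $M_\eps$ with multiplicity $\dim V_\la$. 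For $\la=(n)$ this immediately yields $\mu_{(n)}=\sum_g\eps(g)=s_0^\eps$ by~\eqref{constants}, accounting for the one-dimensional space of constants.

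Second, for $\la\in\{(n-1,1),(n-2,1^2)\}$ I would substitute the formula $\eps(g)=\bar\eps-\tfrac12\langle\varrho(g)h_\inv,h_\eps\rangle$ from Theorem~\ref{th:main} (where $\bar\eps$ denotes the average value of $\eps$) into the above trace. The $\bar\eps$-term drops because $\chi_\la$ is orthogonal to the trivial character for $\la\neq(n)$. For the remaining sum, since $\varrho=\pi_1\oplus\pi_2$ respects $\M=\Pnat\M\oplus\Phook\M$, I would split $\langle\varrho(g)h_\inv,h_\eps\rangle$ diagonally and apply the Schur orthogonality relation $\sum_g\langle\pi_j(g)v,w\rangle\chi_{\pi_i}(g)=\delta_{ij}\frac{n!}{\dim V_{\pi_i}}\langle v,w\rangle$ to obtain
$$
\mu_{\la_i}\;=\;-\frac{n!}{2\,\dim V_{\la_i}}\,\langle P_i h_\inv,\,P_i h_\eps\rangle,\qquad i=1,2.
$$

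Finally, I would evaluate these four inner products directly from the matrices of Lemma~\ref{l:proj}. For $\eps=\maj$ the computation gives $\langle\Pnat h_\inv,\Pnat h_\maj\rangle=n-1$ and $\langle\Phook h_\inv,\Phook h_\maj\rangle=\binom{n-1}{2}$, so that both $\mu_{\la_i}$ collapse to $-n!/2=s_1^\maj$; for $\eps=\des$ the corresponding values are $2(n-1)/n$ and $(n-1)(n-2)/n$, which give $\mu_{\la_1}=\mu_{\la_2}=-(n-1)!=s_1^\des$. The heart of the theorem, and the only point not forced by the general setup, is precisely this \emph{a priori} unexpected coincidence $\mu_{(n-1,1)}=\mu_{(n-2,1^2)}$ --- Schur's lemma alone allows the two scalars to differ --- and it is the matching of the two explicit matrix computations that forces all of $\H$ to be a single eigenspace of dimension $n(n-1)/2$ realizing $\pi_{(n-1,1)}\oplus\pi_{(n-2,1^2)}$.
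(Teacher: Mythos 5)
Your proof is correct, and it lands on exactly the same eigenvalue formula $-\frac{n!}{2\dim\pi_k}\langle P_k\hinv,P_kh_\eps\rangle$ and the same four inner products from Lemma~\ref{l:proj} as the paper; the difference is only in how that formula is reached. The paper takes an arbitrary $f\in H_k$, writes its coefficients via Theorem~\ref{th:main} as $f(\si)=\frac1{\sqrt{n!}}\langle\varrho(\si)\hinv,\widehat Tf\rangle$, computes the convolution $(M_{\ti\eps}f)(\si)$ directly, and applies Schur orthogonality of matrix elements to recognize the result as a scalar multiple of $f(\si)$ --- this produces the eigenvalue and exhibits $H_k$ as an eigenspace in one stroke. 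You instead pass through the Wedderburn decomposition: right multiplication acts on the $\la$-isotypic component as $\mathrm{Id}\otimes\pi_\la(u_\eps)$, the rank of $\pi_\la(u_\eps)$ is read off from the multiplicity of $\pi_\la$ in the ideal $\Ide(u_\eps)=\H\oplus\{{\rm const}\}$, and the nonzero eigenvalue is the trace $\sum_g\eps(g)\chi_\la(g)$, which Schur orthogonality (applied to characters rather than to individual matrix elements) converts into the same inner products. Your route is somewhat more structural and makes the multiplicities $\dim\pi_\la$ transparent; its one step left implicit is the identification of the $\mu_\la$-eigenspace with $H_k$ specifically, which the theorem asserts. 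That identification does follow --- for a rank-one operator with nonzero trace, the eigenspace of the nonzero eigenvalue is its image, and the image of $M_\eps$ is the ideal $\H\oplus\{{\rm const}\}$ --- but it should be stated. Both arguments ultimately rest on the non-formal fact you correctly single out: the numerical coincidence of the two ratios $\langle P_k\hinv,P_kh_\eps\rangle/\dim\pi_k$ for $k=1,2$, which is what fuses $H_1$ and $H_2$ into the single $\frac{n(n-1)}2$-dimensional eigenspace $\H$.
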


\begin{proof}
Obviously, the constant element ${\mathbf 1}=\sum_{g\in\sn}g$ in $\C[\sn]$ is an eigenvector for $M_\eps$ with the eigenvalue $s^{\eps}_0=\sum_{g\in\sn}\eps(g)$, which can be found from~\eqref{constants}. Hence in what follows we may consider the operators $M_{\ti\eps}$ of multiplication by the centered (orthogonal to the constants) vectors $u_{\ti\eps}$, and we must prove that each of these operators has a single nonzero eigenvalue equal to  $s^{\eps}_1$ and the corresponding eigenspace coincides with $\H$.

The fact that $\H$ is an invariant subspace for $M_{\ti\eps}$ outside of which the operator vanishes immediately follows from Theorem~\ref{th:viaunits}. Each of the irreducible subspaces 
$H_1=\Cnat\H$ and $H_2=\Chook\H$ is also invariant.
Let $f=\sum_{g\in\sn}f(\si)\si\in H_k\subset\H$, where $k=1,2$. By Theorem~\ref{th:main}, we have $f(\si)=\frac1{\sqrt{n!}}\langle\varrho(\si)\hinv,\widehat Tf\rangle$. Then
\bea
(M_{\ti\eps}f)(\si)&=&\sum_{g\in\sn}f(g)\ti\eps(g^{-1}\si)=
-\frac1{2{\sqrt{n!}}}\sum_{g\in\sn}\langle\varrho(g)\hinv,\widehat Tf\rangle\langle\varrho(g^{-1}\si)\hinv,h_{\eps}\rangle\\
&=&-\frac1{2{\sqrt{n!}}}\sum_{g\in\sn}\langle\varrho(g)\hinv,\widehat Tf\rangle\langle\varrho(g)h_{\eps},\varrho(\si)\hinv\rangle\\
&=&-\frac{\sqrt{n!}}{2\dim\pi_k}\langle P_k\hinv,P_kh_{\eps}\rangle\langle\widehat Tf,\varrho(\si)\hinv\rangle
\eea
by the orthogonality relations for matrix elements. But the latter expression is equal to
$-\frac{n!}{2\dim\pi_k}\langle P_k\hinv,P_kh_{\eps}\rangle f(\si)$, which implies that $f$ is an eigenvector of $M_{\ti\eps}$ with the eigenvalue 
\beq\label{eigenvalue}
-\frac{n!}{2\dim\pi_k}\langle P_k\hinv,P_kh_{\eps}\rangle.
\eeq
It remains to observe that $\dim\pi_1=n-1$, $\dim\pi_2=\frac{(n-1)(n-2)}2$ and to calculate the inner products of matrices using Lemma~\ref{l:proj}:
\bea
\langle P_1\hinv,P_1\hdes\rangle=\frac{2(n-1)}n,\quad\langle P_2\hinv,P_2\hdes\rangle=\frac{(n-1)(n-2)}n,\\
\langle P_1\hinv,P_1\hmaj\rangle=n-1,\quad\langle P_2\hinv,P_2\hmaj\rangle=\frac{(n-1)(n-2)}2.
\eea
We see that in the subspaces $H_1$ and $H_2$ the eigenvalues coincide and are equal to the desired value.
\end{proof}

 \begin{corollary}\label{cor:identities}
In the group algebra $\C[\sn]$, the following identities hold for convolutions of the combinatorial functions under consideration:
\bea
\tumaj*\tudes=-(n-1)!\cdot\tumaj,\qquad \tumaj*\tumaj=-\frac{n!}2\cdot\tumaj,\\
\tudes*\tudes=-(n-1)!\cdot\tudes,\qquad \tudes*\tumaj=-\frac{n!}2\cdot\tudes,\\
\tuinv*\tudes=-(n-1)!\cdot\tuinv,\qquad 
\tuinv*\tumaj=-\frac{n!}2\cdot\tuinv,
\eea
or, explicitly,
$$
\sum_{g\in\sn}\widetilde\maj(g)\,\widetilde\des(g^{-1}\si)=-(n-1)!\cdot\widetilde\maj(\si),
$$
and similarly for all the other convolutions. In particular, $\tudes$ and $\tumaj$ are, up to normalization, idempotents in
 $\C[\sn]$.
\end{corollary}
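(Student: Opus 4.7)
The plan is to deduce the corollary immediately from the two preceding results: Theorem~\ref{th:viaunits}, which locates $\tudes$, $\tumaj$, $\tuinv$ inside $\H$, and Theorem~\ref{th:spectra}, which tells us that $M_{\ti\maj}$ and $M_{\ti\des}$ act on $\H$ as scalars equal to $-\frac{n!}{2}$ and $-(n-1)!$, respectively.

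First I would translate the convolutions appearing in the statement into products in the group algebra. Given the convention $f*h(\si) = \sum_{g} f(g) h(g^{-1}\si)$, one computes directly that if $f=\sum_g f(g)g$ and $h=\sum_g h(g)g$ are the corresponding elements of $\C[\sn]$, then $f*h$ corresponds to the product $f\cdot h$. Hence, for instance, $\tumaj * \tudes$ equals $u_{\ti\maj}\cdot u_{\ti\des} = M_{\ti\des}(u_{\ti\maj})$, and analogously for the other five convolutions.

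Next I would apply Theorem~\ref{th:viaunits}: it asserts that each of $u_{\ti\maj}$, $u_{\ti\des}$, $u_{\ti\inv}$ lies in $\H$. By Theorem~\ref{th:spectra}, the operators $M_{\ti\maj}$ and $M_{\ti\des}$, when restricted to $\H$, act as multiplication by the scalars $s_1^{\rm maj}=-\frac{n!}{2}$ and $s_1^{\rm des}=-(n-1)!$, respectively. Substituting each of the three vectors above as the input and reading off the scalar then gives all six identities in one stroke; for example,
\[
\tumaj * \tudes = M_{\ti\des}(u_{\ti\maj}) = -(n-1)!\cdot\tumaj,
\qquad
\tuinv * \tumaj = M_{\ti\maj}(u_{\ti\inv}) = -\tfrac{n!}{2}\cdot\tuinv,
\]
and the remaining four follow in exactly the same way. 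The explicit sum formula $\sum_g \widetilde\maj(g)\widetilde\des(g^{-1}\si) = -(n-1)!\cdot\widetilde\maj(\si)$ is just the coefficient-of-$\si$ version of the first identity.

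Finally, for the idempotent assertion, dividing both sides of $\tudes*\tudes = -(n-1)!\cdot\tudes$ by $-(n-1)!$ shows that $-\tudes/(n-1)!$ squares to itself, and similarly $-2\tumaj/n!$ is idempotent. There is essentially no obstacle here: the only thing that requires care is fixing the convolution/multiplication convention so that ``right multiplication by $u_{\ti\eps}$'' in Theorem~\ref{th:spectra} matches the convolution on the left in the statement; once that is pinned down the corollary is a one-line consequence of the two cited theorems.
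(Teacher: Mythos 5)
Your argument is correct and is essentially identical to the paper's own proof: both deduce the identities by observing that $\tumaj,\tudes,\tuinv$ lie in $\H$ (Theorem~\ref{th:viaunits}), which Theorem~\ref{th:spectra} identifies as the eigenspace of the right-multiplication operators $M_{\ti\des}$ and $M_{\ti\maj}$ with eigenvalues $-(n-1)!$ and $-\frac{n!}{2}$. Your care in matching the convolution convention to right multiplication is a sensible addition but does not change the substance.
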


\begin{proof}
By Theorem~\ref{th:spectra}, each of the vectors $\tumaj,\tudes,\tuinv$ lies in an eigenspace of the operators of right multiplication by
 $\tudes,\tumaj$ with known eigenvalues, which implies the desired identities.
\end{proof}

\begin{theorem}\label{th:inv}
The operator $\Minv$ has three nonzero eigenvalues: $s^{\inv}_0>0$ and $s^{\inv}_1,s^{\inv}_2<0$, where
$$
s_0^{\rm inv}=n!\cdot\frac{n(n-1)}4,\qquad s_1^{\rm inv}=-\frac{(n+1)!}6,\qquad s_2^{\rm inv}=-\frac{n!}6.
$$
The subspace corresponding to $s^{\inv}_0$ is one-dimensional and coincides with the subspace of constants. 
The subspace corresponding to $s^{\inv}_1$ has dimension $n-1$ and coincides with  $H_1=\Cnat\H$. 
The subspace corresponding to $s^{\inv}_2$ has dimension $\frac{(n-1)(n-2)}2$ and coincides with  $H_2=\Chook\H$.  
\end{theorem}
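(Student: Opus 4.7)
The plan is to follow exactly the same strategy as the proof of Theorem~\ref{th:spectra}, specialized now to $\eps=\inv$; the only genuinely new feature is that the two irreducible summands $H_1$ and $H_2$ of $\H$ will produce \emph{distinct} eigenvalues, which is why $\Minv$ ends up with three nonzero eigenvalues rather than two.

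First I dispose of the constants: $\mathbf{1}=\sum_g g$ is an eigenvector of $\Minv$ with eigenvalue $s_0^{\inv}=\sum_g\inv(g)=n!\cdot n(n-1)/4$ by~\eqref{constants}, and its span carries the trivial representation $\pi_{(n)}$. Next I pass to the centered operator $M_{\widetilde\inv}$. Since $u_{\widetilde\inv}\in\H$ by Theorem~\ref{th:viaunits}, the image of $M_{\widetilde\inv}$ is contained in $\H$, and by exactly the same argument as in the proof of Theorem~\ref{th:spectra} the operator also vanishes on the orthogonal complement of $\H\oplus\C\mathbf{1}$. Hence the remaining spectral analysis is confined to the invariant decomposition $\H=H_1\oplus H_2$, and each summand $H_k$ is separately invariant.

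For the action on each $H_k$ I replay verbatim the calculation from the proof of Theorem~\ref{th:spectra} that produced formula~\eqref{eigenvalue}, but now with $\eps=\inv$ on both sides of the inner product. This yields
\[
M_{\widetilde\inv}\big|_{H_k}=\lambda_k\,\mathrm{Id},\qquad \lambda_k=-\frac{n!}{2\dim\pi_k}\,\|P_k\hinv\|^2,
\]
so everything reduces to the squared norms of the two projections of $\hinv$ already written out in Lemma~\ref{l:proj}. From $P_1\hinv=A$ with entries $a_{i,i+k}=2k/n$, a short power-sum calculation gives $\|P_1\hinv\|^2=\sum_{k=1}^{n-1}(n-k)(2k/n)^2=(n^2-1)/3$; orthogonality of the projections combined with $\|\hinv\|^2=\binom{n}{2}$ then yields $\|P_2\hinv\|^2=(n-1)(n-2)/6$ without any further work.

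Substituting into the formula for $\lambda_k$, and using $\dim\pi_1=n-1$ and $\dim\pi_2=(n-1)(n-2)/2$, gives $\lambda_1=-(n+1)!/6=s_1^{\inv}$ and $\lambda_2=-n!/6=s_2^{\inv}$; the dimensions and the identifications $H_1=\Cnat\H$ and $H_2=\Chook\H$ of the eigenspaces follow directly from the construction of the irreducible decomposition of $\H$ in Section~\ref{sec:skew}. I expect no genuine obstacle: the conceptual framework is entirely inherited from Theorem~\ref{th:spectra} and formula~\eqref{eigenvalue}, and the only real work is the two elementary norm computations.
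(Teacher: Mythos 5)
Your proof is correct and follows essentially the same route as the paper: restrict to the invariant decomposition $\C\mathbf{1}\oplus H_1\oplus H_2$, replay the eigenvalue formula~\eqref{eigenvalue} with $\eps=\inv$, and reduce everything to the two squared norms $\|P_1\hinv\|^2=(n^2-1)/3$ and $\|P_2\hinv\|^2=\|\hinv\|^2-\|P_1\hinv\|^2$. Your value $\|P_2\hinv\|^2=(n-1)(n-2)/6$ is the correct one (the paper's printed $(n-1)(n-2)/2$ is a typo, as both the subtraction $\frac{n(n-1)}{2}-\frac{n^2-1}{3}$ and the resulting eigenvalue $-n!/6$ confirm).
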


\begin{proof}
The proof is entirely similar to that of Theorem~\ref{th:spectra}, one should only calculate
$
\|\Pnat\hinv\|^2=\sum_{k=1}^{n-1}\frac{4k^2}{n^2}(n-k)=\frac{n^2-1}{3} 
$
and $\|\Phook\hinv\|^2=\frac{n(n-1)}{2}-\frac{n^2-1}3=\frac{(n-1)(n-2)}{2}$ and substitute these values into~\eqref{eigenvalue}.
\end{proof}

\begin{corollary}
The function $\inv$ on the group $\sn$ is conditionally nonpositive definite, i.e., for every collection of complex numbers
 $(x_\si)_{\si\in\sn}$,
\beq\label{cpd}
\sum_{g,h\in\sn}\inv(g^{-1}h)x_g\bar x_h\le0  \mbox{ provided that }\sum_{\si\in\sn} x_\si=0.
\eeq
\end{corollary}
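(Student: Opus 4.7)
The plan is to recognize the double sum in~\eqref{cpd} as the quadratic form of the self-adjoint operator $\Minv$ acting on $\C[\sn]$, and then invoke the spectral description furnished by Theorem~\ref{th:inv}. For $x=\sum_\si x_\si\si\in\C[\sn]$, writing out the product $x\cdot u_{\inv}$ coefficient by coefficient gives $(x\cdot u_{\inv})(\si)=\sum_g x_g\inv(g^{-1}\si)$, and hence, in the standard inner product on $\C[\sn]$ in which $\{g\}_{g\in\sn}$ is orthonormal,
\[
\langle\Minv x,x\rangle=\sum_{g,h\in\sn}\inv(g^{-1}h)\,x_g\bar x_h.
\]
Thus~\eqref{cpd} amounts to $\langle\Minv x,x\rangle\le 0$ whenever $x$ is orthogonal to the constant vector $\mathbf 1=\sum_\si\si$, which is exactly the condition $\sum_\si x_\si=0$.

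Next I note that $\inv(\si^{-1})=\inv(\si)$, so $u_{\inv}$ is invariant under the canonical involution $\sum a_g g\mapsto\sum\bar a_g g^{-1}$ on $\C[\sn]$, and consequently $\Minv$ is self-adjoint and admits an orthogonal spectral decomposition. By Theorem~\ref{th:inv} the only positive eigenvalue $s_0^{\inv}$ has eigenspace equal to the line of constants, the negative eigenvalues $s_1^{\inv},s_2^{\inv}<0$ correspond to $H_1$ and $H_2$ respectively, and since the total dimension of the three listed eigenspaces is $1+(n-1)+\tfrac{(n-1)(n-2)}{2}=\dim\mathcal I$, the orthogonal complement $\mathcal I^\perp$ lies in the kernel of $\Minv$.

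Finally, the constraint $\sum_\si x_\si=0$ places $x$ in the orthogonal complement of the line of constants, i.e.\ in $H_1\oplus H_2\oplus\mathcal I^\perp$, a subspace on which every eigenvalue of $\Minv$ is non-positive. Therefore $\langle\Minv x,x\rangle\le 0$, proving the claim. The main obstacle is really only the bookkeeping involved in identifying the double sum with $\langle\Minv x,x\rangle$ and in verifying self-adjointness through the symmetry $\inv(\si)=\inv(\si^{-1})$; all of the essential structural content—the location and signs of the eigenvalues—has already been established in Theorem~\ref{th:inv}.
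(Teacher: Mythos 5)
Your proposal is correct and follows exactly the route the paper intends: the corollary is stated as an immediate consequence of Theorem~\ref{th:inv}, and your argument simply makes explicit the identification of the double sum with $\langle\Minv x,x\rangle$, the self-adjointness of $\Minv$ via $\inv(\si^{-1})=\inv(\si)$, and the observation that the constraint $\sum_\si x_\si=0$ removes the unique positive eigenspace. All steps check out.
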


Observe that inequality~\eqref{cpd} holds also for  $\maj$ and $\des$, however, these functions, in contrast to $\inv$, are not symmetric on $\sn$ (i.e., do not satisfy the relation $f(\si^{-1})=\overline{f(\si)}$), hence they are not conditionally nonpositive definite according to the classical definition.

The developed techniques allow one to obtain also other identities. Consider, for example, the character $\chi_{\rm nat}$ of the natural representation $\pi_{(n-1,1)}$ of the symmetric group $\sn$. It is well known that $\chi_{\rm nat}(g)=\fix(g)-1$.

\begin{corollary}\label{cor:fix}
\beq\label{fix}
\frac1{n!}\sum_{g\in\sn}\eps(g)(\fix(g)-1)=\begin{cases}-\frac{1}2,&\eps=\maj,\\
-\frac1n,&\eps=\des,\\
-\frac{n+1}6,&\eps=\inv\!.
\end{cases}
\eeq
\end{corollary}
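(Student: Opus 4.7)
The plan is to reduce the three sums to explicit inner products of skew-symmetric matrices by combining the ``matrix'' formulas~\eqref{maj}--\eqref{inv} from Theorem~\ref{th:main} with Schur orthogonality applied to the reducible representation $\varrho=\pi_1\oplus\pi_2$. To begin, I would substitute $\fix(g)-1=\chi_{\rm nat}(g)$, the character of $\pi_1$. The additive constants $n(n-1)/4$ and $(n-1)/2$ in~\eqref{maj}--\eqref{inv} then contribute a multiple of $\sum_{g\in\sn}\chi_{\rm nat}(g)=0$ (orthogonality of $\chi_{\rm nat}$ with the trivial character) and drop out, leaving
$$
\sum_{g\in\sn}\eps(g)(\fix(g)-1)=-\frac{1}{2}\sum_{g\in\sn}\chi_{\rm nat}(g)\,\langle\varrho(g)\hinv,h_\eps\rangle
$$
for each $\eps\in\{\maj,\des,\inv\}$.

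The crucial step is Schur orthogonality. Since $\chi_{\rm nat}=\chi_{\pi_1}$ and all characters of $\sn$ are real, applying Schur's lemma to each irreducible summand of $\varrho$ gives
$$
\sum_{g\in\sn}\chi_{\rm nat}(g)\,\varrho(g)=\frac{n!}{\dim\pi_1}\,\Pnat=\frac{n!}{n-1}\,\Pnat,
$$
with the $\pi_2$-block vanishing because $\langle\chi_{\rm nat},\chi_{\pi_2}\rangle=0$. Inserting this into the above inner product and using that $\Pnat$ is a self-adjoint projection, one obtains
$$
\frac{1}{n!}\sum_{g\in\sn}\eps(g)(\fix(g)-1)=-\frac{1}{2(n-1)}\,\langle\Pnat\hinv,\Pnat h_\eps\rangle.
$$

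It then remains only to evaluate three inner products, which is immediate from the explicit matrices in Lemma~\ref{l:proj}; indeed, the values $\langle\Pnat\hinv,\Pnat\hmaj\rangle=n-1$, $\langle\Pnat\hinv,\Pnat\hdes\rangle=2(n-1)/n$, and $\|\Pnat\hinv\|^{2}=(n^2-1)/3$ are already recorded in the proofs of Theorems~\ref{th:spectra} and~\ref{th:inv}. Substituting these yields exactly $-\tfrac12$, $-\tfrac1n$, and $-\tfrac{n+1}{6}$, matching the three right-hand sides of~\eqref{fix}. The only slightly delicate point is the Schur-orthogonality step, which must be applied irreducible-summand-wise to $\varrho$ so that only the $\pi_1$-block survives; everything else is routine bookkeeping.
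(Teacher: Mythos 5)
Your proposal is correct and follows essentially the same route as the paper: both reduce $\sum_g\eps(g)\chi_{\rm nat}(g)$ via Schur orthogonality to $-\frac{1}{2(n-1)}\langle \Pnat\hinv,\Pnat h_\eps\rangle$ and then plug in the inner products already computed in the proofs of Theorems~\ref{th:spectra} and~\ref{th:inv}. The only cosmetic difference is that you package the orthogonality as the projection formula $\sum_g\chi_{\rm nat}(g)\varrho(g)=\frac{n!}{n-1}\Pnat$, whereas the paper expands $\chi_{\rm nat}$ in matrix coefficients of an orthonormal basis of $P_1\M$ and applies the orthogonality relations for matrix elements directly.
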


\begin{proof}
Let $\{f_i\}_{i=1}^{n-1}$ be an orthonormal basis in the space $P_1\M$ of the natural representation. Then for
 $\si\in\sn$ we have $\chi_{\rm nat}(\si)=\sum_{i=1}^{n-1}\langle\varrho(\si)f_i,f_i\rangle$. Thus the left-hand side in~\eqref{fix} is equal (again by the orthogonality relations for matrix elements) to
$$
-\frac12\sum_i\frac1{n!}\sum_{\si\in\sn}\langle\varrho(\si)f_i,f_i\rangle\langle\varrho(\si)\hinv,h_\eps\rangle=
-\frac1{2(n-1)}\langle P_1\hinv,P_1h_\eps\rangle,
$$
and~\eqref{fix} follows from the formulas for the inner products obtained in the proofs of Theorems~\ref{th:spectra} and~\ref{th:inv}.
\end{proof}

Note that  identity~\eqref{fix} for the major index can also be obtained from known results on the joint distribution of the statistics
$\maj$ and $\fix$ (see~\cite{GesReu}), but this requires heavy analytic calculations, whereas the matrix formulas give the answer immediately and simultaneously for all three statistics
$\maj,
\des,\inv$.

\section{The Solomon algebra}\label{sec:solomon}

Originally, we proved Theorem~\ref{th:spectra} using the techniques developed in~\cite{GarReu} in connection with the study of the so-called Solomon descent algebra~\cite{Sol}. However, it turned out that the approach presented in Section~\ref{sec:skew} and based on the observations from Theorems~\ref{th:isom} and~\ref{th:viaunits} is much simpler, more efficient and allows one to obtain more results. Nevertheless, we believe that the link to the study in~\cite{GarReu} is important and worth further investigation, so we will briefly describe it in this section.

Denote by $\Comp(n)$ the set of compositions of a positive integer $n$. For $p=(\al_1,\ldots,\al_k)\in\Comp(n)$, put
$$
B_p=\sum_{\si:\Des(\si)\subset\{\al_1,\al_1+\al_2,\ldots,\al_1+\ldots+\al_{k-1}\}}\si\in\C[\sn]. 
$$
In particular,  $B_{(1^n)}=\sum_{\si\in\sn}\si$, and for
$p_k=(1,\ldots,1,2,1,\ldots,1)$ (where~$2$ is in the $k$th position)
$B_{p_k}=\sum_{\si: k\notin\Des(\si)}\si$.

The elements $\{B_p\}_{p\in\Comp(n)}$ generate a subalgebra $\Si_n$ of the group algebra $\C[\sn]$ called the {\it Solomon descent algebra}. In the paper~\cite{GarReu}, devoted to the study of the structure and representations of this algebra, another important basis
$\{I_p\}_{p\in\Comp(n)}$ of  $\Si_n$ was introduced, and the transition matrix between the two bases was obtained. In particular, 
$B_{(1^n)}=I_{(1^n)}$ and $B_{p_k}=I_{p_k}+\frac12I_{(1^n)}$.

Let $a=\sum_qa_qI_q\in\Si_n$, and let $M_a$ be the operator of right multiplication by
$a$ in $\C[\sn]$. In \cite[Theorem~4.4]{GarReu}, it is (implicitly) proved that the eigenvalues  
$s_\la$ of~$M_a$ are indexed by the partitions $\la$ of $n$ and
$s_\la=b_\la\sum_{p:\la(p)=\la}a_p$,
the multiplicity of the eigenvalue $s_\la$ being equal to $\frac{n!}{z_\la}$. 

Then our results on the spectra of the elements
$\udes,\umaj$ follow from the following simple observation.

\begin{lemma}
\bea
\udes=(n-1)B_{(1^n)}-\sum_{k=1}^{n-1}B_{p_k}=\frac{(n-1)}2I_{(1^n)}-\sum_{k=1}^{n-1}I_{p_k},\qquad \\
\umaj=\frac{n(n-1)}2B_{(1^n)}-\sum_{k=1}^{n-1}kB_{p_k}=\frac{n(n-1)}4I_{(1^n)}-\sum_{k=1}^{n-1}kI_{p_k}.
\eea
\end{lemma}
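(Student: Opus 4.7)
The proof will consist of two independent verifications, one for $\udes$ and one for $\umaj$, and each will split naturally into two steps: first, an identity in the $B$-basis obtained by a direct counting argument; second, a passage to the $I$-basis via the transition rules $B_{(1^n)}=I_{(1^n)}$ and $B_{p_k}=I_{p_k}+\tfrac12 I_{(1^n)}$ recalled just above the lemma.

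For the first equalities, my plan is to compute the coefficient of a permutation $\si\in\sn$ on each side. Since $B_{(1^n)}=\sum_{\si\in\sn}\si$ and $B_{p_k}=\sum_{\si:\,k\notin\Des(\si)}\si$, the coefficient of $\si$ on the right of
$$(n-1)B_{(1^n)}-\sum_{k=1}^{n-1}B_{p_k}$$
equals $(n-1)-\#\{k\in\{1,\dots,n-1\}:k\notin\Des(\si)\}=(n-1)-(n-1-\des(\si))=\des(\si)$, proving the first identity for $\udes$. Analogously, the coefficient of $\si$ in
$$\tfrac{n(n-1)}{2}B_{(1^n)}-\sum_{k=1}^{n-1}kB_{p_k}$$
is $\tfrac{n(n-1)}{2}-\sum_{k\notin\Des(\si)}k=\sum_{k\in\Des(\si)}k=\maj(\si)$, giving the $B$-basis formula for $\umaj$.

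For the second equalities, I substitute the transition relations into the $B$-expressions just obtained. For $\udes$:
\begin{eqnarray*}
(n-1)I_{(1^n)}-\sum_{k=1}^{n-1}\Bigl(I_{p_k}+\tfrac12 I_{(1^n)}\Bigr)
=\tfrac{n-1}{2}I_{(1^n)}-\sum_{k=1}^{n-1}I_{p_k}.
\end{eqnarray*}
For $\umaj$, using $\sum_{k=1}^{n-1}k=\tfrac{n(n-1)}{2}$:
\begin{eqnarray*}
\tfrac{n(n-1)}{2}I_{(1^n)}-\sum_{k=1}^{n-1}k\Bigl(I_{p_k}+\tfrac12 I_{(1^n)}\Bigr)
=\tfrac{n(n-1)}{4}I_{(1^n)}-\sum_{k=1}^{n-1}k\,I_{p_k}.
\end{eqnarray*}

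There is no genuine obstacle here: the statement is essentially a bookkeeping exercise, and the only inputs are the definitions of $\des,\maj,B_p$ together with the two transition formulas quoted from~\cite{GarReu}. The only minor point to watch is the compatibility of the decomposition $\Des(\si)\subset\{1,\dots,n-1\}\setminus\{k\}$ with the composition $p_k$, which is exactly the content of the identification $B_{p_k}=\sum_{\si:\,k\notin\Des(\si)}\si$ already stated in the text preceding the lemma.
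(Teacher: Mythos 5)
Your proof is correct, and it is the straightforward verification the paper has in mind: the lemma is presented there as a ``simple observation'' with no written proof, the $B$-basis identities being immediate coefficient counts from the definitions of $\des$, $\maj$, and $B_{p_k}$, and the $I$-basis forms following by substituting the quoted transition relations $B_{(1^n)}=I_{(1^n)}$ and $B_{p_k}=I_{p_k}+\tfrac12 I_{(1^n)}$. Both of your computations check out exactly.
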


The description of the eigenspaces can also be deduced from the results of  \cite{GarReu} (see also \cite[Theorem~2.2 and Corollary~2.3]{CHS}). However, our approach makes it possible to obtain Theorem~\ref{th:spectra} much easier. Note also that the vector
 $\uinv$ does not lie in the Solomon algebra, hence Theorem~\ref{th:inv} cannot be obtained by the method described in this section.

\end{document}